\newtheorem{theorem}{Theorem}
\newtheorem{corollary}[theorem]{Corollary}
\newtheorem{lemma}[theorem]{Lemma}
\newtheorem{remark}[theorem]{Remark}
\newenvironment{proof}[1][Proof]{\textbf{#1.} }{\ \rule{0.5em}{0.5em}}
\numberwithin{equation}{section}
\begin{document}

\bigskip


\bigskip

\textbf{Liouville property for solutions of the linearized
degenerate thin film equation of fourth order in a halfspace.}

\bigskip

S.P.Degtyarev

\bigskip

\textbf{Institute for applied mathematics and mechanics of Ukrainian
National academy of sciences, Donetsk }

\bigskip

E-mail: degtyar@i.ua

\bigskip

\bigskip

\bigskip

\begin{abstract}

We consider a boundary value problem in the half-space for a linear
parabolic equation of fourth order with a degeneration on the
boundary of the half-space. The equation under consideration is
substantially a linearized  thin film equation. We prove that, if
the right hand side of the equation and the boundary condition are
polynomials in the tangential variables and time, the same property
has any solution of a power growth. It is shown also that the
specified property does not apply to normal variable. As an
application, we present a theorem of uniqueness for the problem in
the class of functions of power growth.

The final version is available on Springer at DOI: 10.1007/s00025-015-0467-x

\end{abstract}

\bigskip

Key words:  \ Liouville theorem, fourth order, degenerate  parabolic
equation, thin film equation

MSC: \ 35B53, 35K35, 35K65, 35Q35

\bigskip

\bigskip

\section{Introduction}

\label{s1}

The importance of the classical Liouville theorem is well known.
This theorem in complex analysis states that the entire function,
growing at infinity no more than a power, is a polynomial. In
particular, a bounded entire function is a constant. It is known
also that a similar property is inherited by solutions of many
linear and nonlinear elliptic and parabolic problems. The presence
of such property for solutions of a problem is an extremely
important tool for investigation of qualitative properties of such
solutions. As a simple example of the use of the partial Liouville
theorem of this paper (Theorem \ref{T1.1})we present a result on the
uniqueness for the initial-boundary value problems in a halfspace
for a fourth order degenerate linear equation in the class of
solutions of power growth. The literature on the subject is so vast
and diverse that we do not try to give a brief overview in this
introduction. Among the papers on the Liouville properties of
solutions for degenerate equations we mention only the papers
\cite{1} - \cite{10} and it is definitely not complete list. We note
only that all such papers for degenerate equations are devoted
mainly to second-order equations. Author does not know about the
Liouville theorems for degenerate parabolic equations of higher than
the second order.

In this paper, we consider in the halfspace a boundary value problem
for a fourth order parabolic equation with strong degeneration on
the boundary of the halfspace. This problem is obtained as a model
case under linearization of boundary value problem for the
quasilinear degenerate thin film equation in the formulation of the
papers \cite{11} - \cite{15}. Denote

$R_{+}^{N}=\{x=(x_{1},...,x_{N})\in R^{N}:\ x_{N}>0\}$,
$Q=R_{+}^{N}\times R^{1}=\{(x,t):\ x\in R_{+}^{N},t\in R^{1}\}$. For
 $R>0$ denote also $B_{R}=\left\{ x=(x^{\prime },x_{N})\in
R_{+}^{N}:\quad 0<x_{N}<R,|x^{\prime }|<R\right\} $, $x^{\prime
}=(x_{1},...,x_{N-1})$, $Q_{R}=\left\{ (x,t)\in Q:\quad x\in
B_{R},-R^{2}<t<R^{2}\right\} $.

Let a function  $u(x,t)$ be defined in $Q$ ,$u(x,t)\in W_{\infty
,loc}^{4,1}(Q)$ and let $u(x,t)$  satisfies in this domain to the
equation

\bigskip \begin{equation}
\frac{\partial u}{\partial t}+\nabla \left( x_{N}^{2}\nabla \Delta u-\beta
\nabla u\right) =f(x,t),  \label{1.1}
\end{equation}
where  $\nabla =(\partial /\partial x_{1},...,\partial /\partial
x_{N})$ , $\Delta $ is the Laplace operator in space variables,
$\beta \geq 0$ is a given nonnegative constant, $f(x,t)$ is a given
function. We suppose that the function $u(x,t)$ possesses the
following weighted regularity

\begin{equation}
\sum_{|\alpha |=4}\max_{Q_{R}}\left\vert x_{N}^{2}D_{x}^{\alpha
}u\right\vert +\sum_{|\alpha |=3}\max_{Q_{R}}\left\vert x_{N}D_{x}^{\alpha
}u\right\vert \leq C(R)<\infty ,  \label{1.2}
\end{equation}
where $\alpha =(\alpha _{1},...,\alpha _{N})$ is a multiindex,
$D_{x}^{\alpha }u=\partial ^{|\alpha |}u/\partial x_{1}^{\alpha
_{1}}...\partial x_{N}^{\alpha _{N}}$, $C(R)$ is a depending on $R$
constant. We suppose also that the function $u(x,t)$ satisfies on
the boundary of the domain $Q$, that is at $x_{N}=0$, to the
Dirichlet condition

\begin{equation}
u(x,t)|_{x_{N}=0}=u(x^{\prime },0,t)=g(x^{\prime },t),  \label{1.3}
\end{equation}
where $g(x^{\prime },t)$ is a given function.

Note here that, as it was shown in the paper \cite{11}, for example,
in the one-dimensional setting, conditions \eqref{1.2}, \eqref{1.3}
uniquely determine the solution $u(x,t)$. Thus, the restriction for
the class of solution in the form as in \eqref{1.2} serves in some
sense as a replacement for the second boundary condition, which is
necessary for uniformly parabolic fourth order equations of the form
\eqref{1.1} (in this regard see Remark \ref{R1.1} ). We also note
that condition \eqref{1.2} can actually be relaxed to, for example,
the local weighted integrability conditions for the senior
derivatives. It is not our purpose in this paper to find the exact
such conditions.

Suppose finally that the function $u(x,t)$ has power growth at
infinity

\begin{equation}
\max_{Q_{R}}|u(x,t)|\leq CR^{M},\quad R\geq 1,\quad M>0,  \label{1.4}
\end{equation}
where $C$  is some positive constant, $M$ is a given nonnegative
exponent. We agree here that in what follows we denote by the same
symbols $C$, $\nu$, all absolute constants or constants depending
only on the fixed initial data of the problem.

Let us formulate now the main result.

\begin{theorem} \label{T1.1}
Let the function $f(x,t)$ from \eqref{1.1} is a polynomial with
respect to the "tangent" variables $x^{\prime }$ and $t$ of degree
$M_{f}$, and the function $g(x^{\prime },t)$ is a polynomial of
degree $M_{g}$. \ Then, under conditions \eqref{1.1} -\eqref{1.4},
the function  $u(x,t)$ is a polynomial with respect to the variables
$x^{\prime }$ and $t$ of degree not greater than $M_{u}=[M]$.
\end{theorem}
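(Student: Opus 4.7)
My plan combines a homogeneous Liouville theorem for the operator
\begin{equation*}
L:=\partial_{t}+\nabla\!\cdot\!\bigl(x_{N}^{2}\nabla\Delta\,\cdot\,\bigr)-\beta\Delta
\end{equation*}
with a bootstrap argument exploiting the translation invariance of $L$ in the variables $(x',t)$: its coefficients depend only on $x_{N}$. The central observation is that $L$ is invariant under the parabolic scaling $x\mapsto Rx$, $t\mapsto R^{2}t$; the factor $R^{2}$ produced by the weight $x_{N}^{2}$ exactly compensates two of the four spatial derivatives in the leading term, so the leading term, the $\beta$-perturbation, and $\partial_{t}$ all scale as $R^{-2}$. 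Consequently, the natural parabolic scale here is order~$2$, not order~$4$.

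The homogeneous Liouville statement asserts: any $w$ with $Lw=0$ in $Q$, $w|_{x_{N}=0}=0$, satisfying \eqref{1.2} and $|w|\leq CR^{M'}$ on $Q_{R}$, is a polynomial in $(x',t)$ of parabolic degree $\leq[M']$ with coefficients depending only on $x_{N}$. To establish it, set $w_{R}(y,\tau):=R^{-M'}w(Ry,R^{2}\tau)$; by scale invariance $Lw_{R}=0$, $w_{R}|_{y_{N}=0}=0$, and $|w_{R}|\leq C$ on $Q_{1}$. Interior a priori estimates for $L$ in the weighted class \eqref{1.2} (of the type developed in the linearization framework of \cite{11}--\cite{15}) then provide
\begin{equation*}
|D_{y'}^{\alpha'}\partial_{\tau}^{j}w_{R}(y,\tau)|\leq C(\alpha',j)\qquad\text{on }Q_{1/2},
\end{equation*}
uniformly in $R$. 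Rescaling back gives $|D_{x'}^{\alpha'}\partial_{t}^{j}w(x,t)|\leq CR^{M'-|\alpha'|-2j}$ at any point of $Q_{R/2}$, and letting $R\to\infty$ for $|\alpha'|+2j>M'$ forces all such tangential/temporal derivatives to vanish identically, which is exactly the claim.

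Next I bootstrap to the inhomogeneous problem. For any multi-index $\alpha'$ and any $j\geq 0$ with $|\alpha'|+2j>\max(M_{f},M_{g})$, translation invariance shows that $w:=D_{x'}^{\alpha'}\partial_{t}^{j}u$ satisfies the homogeneous problem (the corresponding derivatives of $f$ and $g$ vanish since they are polynomials of those degrees), inherits the weighted regularity \eqref{1.2}, and has polynomial growth of some order $\leq M$ by the same interior estimates applied to $u$. By the homogeneous Liouville statement just proved, $w$ is then a polynomial in $(x',t)$ of parabolic degree $\leq[M]$. Taking one more derivative $D_{x'}^{\beta'}\partial_{t}^{k}$ with $|\beta'|+2k>[M]$ annihilates $w$, so $D_{x'}^{\alpha'+\beta'}\partial_{t}^{j+k}u\equiv 0$ as soon as the total parabolic order exceeds $\max(M_{f},M_{g})+[M]$. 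Hence $u$ is a polynomial in $(x',t)$ with $x_{N}$-dependent coefficients, and the growth bound \eqref{1.4} rules out any monomial $(x')^{\alpha'}t^{j}$ with $|\alpha'|+2j>M$, leaving $\deg u\leq[M]$.

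The main obstacle is the derivation of the scale-invariant interior estimates for the degenerate fourth-order operator $L$ within the weighted class \eqref{1.2}: classical uniformly parabolic Schauder theory does not apply because of the degeneration at $x_{N}=0$, and one must work in weighted function spaces adapted to the weight $x_{N}^{2}$. The assumption \eqref{1.2} supplies precisely the control of the senior derivatives near the boundary needed for the compactness step of the scaling argument; without it the family $\{w_{R}\}$ would not be precompact and the conclusion would fail. The non-extension of the Liouville property to the normal variable $x_{N}$, announced in the abstract, reflects the absence of translation invariance of $L$ in $x_{N}$, which blocks any analogous derivative estimates in the normal direction.
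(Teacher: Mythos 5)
Your overall architecture (reduce to the homogeneous problem by differentiating away the polynomial data, then prove a homogeneous Liouville statement by showing that high-order tangential/temporal derivatives vanish) matches the paper's, and your observation that the weight $x_N^2$ makes the natural scaling second-order parabolic ($x\sim R$, $t\sim R^2$) is correct and is reflected in the paper's choice of cylinders $Q_R$. But the engine you propose for the homogeneous case --- rescale $w_R(y,\tau)=R^{-M'}w(Ry,R^2\tau)$ and invoke ``interior a priori estimates for $L$ in the weighted class \eqref{1.2}'' to get uniform pointwise bounds $|D^{\alpha'}_{y'}\partial_\tau^j w_R|\le C(\alpha',j)$ on $Q_{1/2}$ --- is exactly the part you do not supply, and it is the entire technical content of the paper. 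No such Schauder-type estimates up to the degenerate boundary $\{x_N=0\}$ are available off the shelf (the cited thin-film well-posedness papers do not provide them in this form), and, worse, they cannot be extracted from the hypotheses as stated: condition \eqref{1.2} only gives a constant $C(R)$ with no growth control in $R$, so the rescaled family $\{w_R\}$ does not inherit any uniform weighted bound, and the compactness step you lean on has no uniform input. You flag this yourself as ``the main obstacle,'' but flagging it does not close it; as written the proof has a genuine gap at its central step.

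The paper circumvents precisely this difficulty by never using \eqref{1.2} quantitatively: it is used only qualitatively, to kill boundary terms in integrations by parts (via \eqref{3.10}), while the quantitative control comes from Caccioppoli-type energy estimates (Lemma \ref{L2.1}), proved with the Hardy inequality \eqref{3.6}, the Nirenberg--Gagliardo interpolation \eqref{3.3}--\eqref{3.5}, and the hole-filling Lemma \ref{L3.1}. These bound $\int_{Q_R}|\nabla u|^2$ and $\int_{Q_R}u_t^2$ by $R^{-2}$ and $R^{-4}$ times $\int_{Q_{qR}}u^2$ alone, and iteration plus the growth bound \eqref{1.4} forces high derivatives to vanish --- no compactness, no pointwise estimates. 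A secondary gap in your write-up: you differentiate $u$ in $(x',t)$ freely, but $u$ is only assumed to lie in $W^{4,1}_{\infty,loc}$ with the weighted bounds \eqref{1.2}, so tangential derivatives of arbitrary order need not exist a priori; the paper handles this by mollifying in $(x',t)$ (which commutes with the $x'$- and $t$-independent coefficients and preserves the polynomial degrees of $f$ and $g$) and passing to the limit in the sense of distributions. If you want to rescue your scaling approach, you would first have to prove the uniform interior estimates it requires, and at that point you would essentially be reproving Lemma \ref{L2.1} in a stronger (pointwise) form.
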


\begin{remark} \label{R1.1}
Note that the function $u(x,t)$  is not  in general a polynomial in
the variable $x_{N}$, as it is shown by the following example.
Consider a function$v(x,t)\equiv v(x_{N})$, that depends only on the
variable  $x_{N}$ and satisfies the simplest inhomogeneous equation
\eqref{1.1}, that is

\begin{equation}
l_{\beta }v\equiv \frac{d}{dx_{N}}\left(
x_{N}^{2}\frac{d^{3}v}{dx_{N}^{3}}-\beta \frac{dv}{dx_{N}}\right) =b,\quad b=const.  \label{1.5}
\end{equation}
The direct calculation shows that for $\beta
>0$ the general solution has the form

\begin{equation}
v(x_{N})=C_{1}x_{N}^{a_{1}}+C_{2}x_{N}^{a_{2}}+C_{3}x_{N}+C_{4}-\frac{b}{%
2\beta }x_{N}^{2},  \label{1.6}
\end{equation}
where $C_{i}$, $i=1,...,4$ are arbitrary constants and the exponents
$a_{1}$, $a_{2}$ are equal to

\begin{equation}
a_{1}=-\left( \frac{1}{2}+\sqrt{\frac{1}{4}+\beta }\right) <-1,\quad
a_{2}=-\frac{1}{2}+\sqrt{\frac{1}{4}+\beta }.  \label{1.7}
\end{equation}
At the same time for $\beta =0$

\begin{equation}
v(x_{N})=C_{1}x_{N}\ln
x_{N}+C_{2}x_{N}^{2}+C_{3}x_{N}+C_{4}-\frac{b}{2}\left( x_{N}^{2}\ln x_{N}-\frac{3}{2}x_{N}^{2}\right) .  \label{1.8}
\end{equation}
In this case the conditions on the class of solutions in \eqref{1.2}
are used to determine one of the arbitrary constants in \eqref{1.6},
\eqref{1.8} (exactly here they are a replacement of the boundary
condition). It follows from \eqref{1.2} that the constant $C_{1}$ in
relations \eqref{1.6}, \eqref{1.8} should be chosen by zero since
the corresponding terms do not satisfy \eqref{1.2}. Thus,

\begin{equation}
v(x_{N})=\left\{ \begin{array}{c}
  C_{2}x_{N}^{a_{2}}+C_{3}x_{N}+C_{4}-\frac{b}{2\beta
}x_{N}^{2},\quad \beta
>0, \\
  C_{2}x_{N}^{2}+C_{3}x_{N}+C_{4}-\frac{b}{2}\left( x_{N}^{2}\ln
x_{N}-\frac{3}{2}x_{N}^{2}\right) ,\quad \beta =0.
\end{array}
\right. \label{1.9}
\end{equation}
This relation gives an example of a function, satisfying all the
conditions of Theorem \ref{T1.1} and which is not a polynomial in
the variable $x_{N}$.
\end{remark}

As a simple application of Liouville theorem  \ref{T1.1} we give a
corollary about uniqueness of the solution to the initial-boundary
value problems in a halfspace for equation \eqref{1.1} in the class
of functions with power growth.

\begin{corollary} \label{C1.1}  (Uniqueness.)

Let a function $u(x,t)$ satisfies homogeneous equation \eqref{1.1}
with $f\equiv 0$ in the halfspace $Q_{+}=Q\cap \left\{ t>0\right\}
$, conditions \eqref{1.2}, homogeneous condition \eqref{1.3} with
$g\equiv 0$, and homogeneous initial condition

\begin{equation}
u(x,0)\equiv 0,\quad t=0.  \label{1.10}
\end{equation}

If $u(x,t)$ has at most power growth at infinity  (that is, $u(x,t)$
satisfies  \eqref{1.4}), then this function is identically equal to
zero, $u(x,t)\equiv 0$.
\end{corollary}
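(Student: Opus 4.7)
The plan is to reduce the uniqueness assertion to the Liouville statement of Theorem \ref{T1.1} by extending $u$ by zero to negative times. Specifically, I would define
\begin{equation*}
\tilde u(x,t) = \begin{cases} u(x,t), & t\geq 0,\\ 0, & t<0,\end{cases}
\end{equation*}
and verify that $\tilde u$ fits the hypotheses of Theorem \ref{T1.1} on the full strip $Q=R_+^N\times R^1$, with right hand side $f\equiv 0$ and Dirichlet data $g\equiv 0$ (both of which are polynomials in $(x',t)$ of degree zero, so trivially satisfy the polynomial assumption of the theorem).

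The first task is to check that $\tilde u\in W^{4,1}_{\infty,loc}(Q)$ and satisfies \eqref{1.1} pointwise a.e.\ across the hyperplane $t=0$. Since $u(x,0)\equiv 0$, every spatial derivative of $u$ also vanishes at $t=0$, so the weighted quantities inside the max in \eqref{1.2} are continuous across $t=0$ when the zero extension is used; hence the local bound \eqref{1.2} passes over to $\tilde u$. For the equation itself, the operator applied to $u$ at $t=0^+$ reduces to the divergence of something built out of spatial derivatives of $u(\cdot,0)\equiv 0$, so it vanishes; combined with the homogeneous equation for $t>0$ and the trivial identity $0=0$ for $t<0$, this yields that $\partial_t\tilde u$ exists and the equation $\partial_t\tilde u+\nabla\cdot(x_N^2\nabla\Delta\tilde u-\beta\nabla\tilde u)=0$ holds a.e.\ in $Q$. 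The Dirichlet condition on $\{x_N=0\}$ trivially extends to $\tilde u$ with $g\equiv 0$, and the power growth bound \eqref{1.4} is inherited by $\tilde u$ with the same exponent $M$ because extension by zero can only decrease $\max_{Q_R}|\tilde u|$.

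With $\tilde u$ verified to satisfy all hypotheses of Theorem \ref{T1.1}, the conclusion of that theorem gives a representation
\begin{equation*}
\tilde u(x,t)=\sum_{|\alpha'|+k\leq [M]} c_{\alpha',k}(x_N)\,(x')^{\alpha'}\,t^k,
\end{equation*}
where the coefficients $c_{\alpha',k}$ are functions of $x_N$ alone. For each fixed $x$, the right hand side is a polynomial in $t$ which, by construction, vanishes for every $t<0$; a polynomial with infinitely many zeros is identically zero, so all $c_{\alpha',k}(x_N)\equiv 0$, and therefore $\tilde u\equiv 0$. Restricting to $t\geq 0$ gives $u\equiv 0$.

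The main obstacle is the verification in the second paragraph: one has to argue rigorously that the zero extension really produces a solution of class \eqref{1.2} in the full strip, rather than merely a weak solution with a potential distributional singularity at $t=0$. This is where the homogeneous initial condition \eqref{1.10} is used decisively, since it forces both $u$ and all its spatial derivatives (and hence the elliptic part of the equation) to vanish at $t=0$, making the extension across $t=0$ compatible with \eqref{1.1} and \eqref{1.2} without loss of regularity.
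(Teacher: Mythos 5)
Your proof is correct and follows essentially the same route as the paper: extend $u$ by zero to $\{t\le 0\}$, check that the extension satisfies the hypotheses of Theorem \ref{T1.1} with $f\equiv 0$, $g\equiv 0$, apply the theorem to conclude the extension is a polynomial in $t$, and note that a polynomial vanishing on $\{t<0\}$ vanishes identically. Your second paragraph merely spells out the gluing at $t=0$ that the paper passes over in one sentence.
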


\begin{proof}

Extent the function $u(x,t)$ to the whole domain $Q$, assuming that
it is identically zero in  $\left\{ t\leq 0\right\} $,  and save its
previous designation $u(x,t)$. In view of the homogeneity of
equation \eqref{1.1} and boundary condition \eqref{1.3}, it follows
from \eqref{1.10} that the extended function satisfies homogeneous
equation  \eqref{1.1} with $f\equiv 0$ and homogeneous condition
\eqref{1.3} with $g\equiv 0$ in the whole domain $Q$. Furthermore,
the extended function inherits power growth at infinity.
Consequently, such the function $u(x,t)$ satisfies all the
conditions of Theorem \ref{T1.1} and therefore is a polynomial in
the variable $t$ for all values of the variables $x$. Since this
polynomial is identically equal to zero in $\left\{t \leq 0
\right\}$, we see that it is identically equal to zero for all $t$.
Thus, the function $u(x,t)$ is identically equal to zero.
\end{proof}

To further content of the article is constructed as follows. In the
second section we present the proof of Theorem \ref{T1.1}. This
proof is based on local integral estimates for derivatives of the
solution $u(x,t)$ with respect to the "tangent" variables
$x^{\prime} = (x_{1},...,x_{N-1})$ and with respect to the variable
$t$ via a local integral norm of the solution itself. The proof of
the necessary local integral estimates is presented for convenience
in the third final section.

\section{The proof of Theorem  \ref{T1.1}.}

Denote for $R>0$, as before,  $B_{R}=\left\{ x=(x^{\prime
},x_{N})\in R_{+}^{N}:\quad 0<x_{N}<R,|x^{\prime }|<R\right\} $,
$x^{\prime }=(x_{1},...,x_{N-1})$, $Q_{R}=\left\{ (x,t)\in Q:\quad
x\in B_{R},-R^{2}<t<R^{2}\right\} $. The proof of Theorem \ref{T1.1}
 is based on the following lemma.

\begin{lemma} \label{L2.1}
Let the function $u(x,t)$ satisfies the conditions of Theorem
\ref{T1.1} except for power growth at infinity. Let, besides,
function $u(x,t)$ be infinitely differentiable in the variables
$x^{\prime}$, $t$ in $\overline{Q}$ and its derivatives with respect
to these variables belong to the same space as $u(x,t)$ itself.
Suppose, in addition, $f \equiv 0$ in  equation \eqref{1.1} and $g
\equiv 0$ in condition \eqref {1.3}.

Then for all $R>1$, $q>1$ the following estimates are valid

\begin{equation}
\int\limits_{Q_{R}}|\nabla u|^{2}dxdt\leq
\frac{C_{q}}{R^{2}}\int\limits_{Q_{qR}}u^{2}dxdt,\quad \int\limits_{Q_{R}}u_{t}^{2}dxdt\leq
\frac{C_{q}}{R^{4}}\int\limits_{Q_{qR}}u^{2}dxdt.  \label{2.1}
\end{equation}

\end{lemma}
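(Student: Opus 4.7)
The plan is to derive both bounds in \eqref{2.1} by Caccioppoli-type energy identities, testing equation \eqref{1.1} (with $f\equiv 0$) against $u\zeta^{2k}$ for the first estimate and against $u_t\zeta^{2k}$ for the second. I fix a smooth cutoff $\zeta\in C_0^{\infty}(Q_{qR})$ with $\zeta\equiv 1$ on $Q_R$, $0\le\zeta\le 1$, and the standard bounds $|D_{x'}^{\alpha}\partial_{x_N}^{j}\partial_{t}^{m}\zeta|\le C_q R^{-|\alpha|-j-2m}$, keeping the exponent $2k$ (large) at my disposal so that derivatives falling on $\zeta$ always leave a nonnegative power of $\zeta$ behind.

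For the first inequality I multiply \eqref{1.1} by $u\zeta^{2k}$ and integrate by parts twice in space. Boundary traces on $\{x_N=0\}$ vanish either by the Dirichlet condition $u|_{x_N=0}=0$ or by the degeneracy factor $x_N^{2}$, and the time integration is unaffected because $\zeta$ has compact support in $t$. Expanding $\Delta u = u_{x_Nx_N}+\Delta' u$ and integrating the mixed term $2\int x_N\zeta^{2k}u_{x_N}\Delta u$ separately in the normal and tangential directions should yield an identity of the form
\begin{equation*}
\int_Q x_N^{2}(\Delta u)^{2}\zeta^{2k} + \int_Q\zeta^{2k}(|\nabla' u|^{2}-u_{x_N}^{2}) + \beta\int_Q|\nabla u|^{2}\zeta^{2k} = \mathcal{R},
\end{equation*}
where $\mathcal{R}$ collects cross terms, each bilinear in derivatives of $u$ and carrying at least one derivative of $\zeta$ (hence a factor $R^{-1}$). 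Every such cross term is handled by weighted Cauchy--Schwarz: pair one factor with the weight $x_N$ and the other with $1/x_N$, so that a small fraction is absorbed into the positive $x_N^{2}(\Delta u)^{2}$ piece of the left-hand side, while the remainder is majorised by $C_q R^{-2}\int_Q u^{2}\zeta^{2k-c}$.

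The main obstacle is the negative summand $-\int\zeta^{2k}u_{x_N}^{2}$, together with the fact that the principal energy $\int x_N^{2}(\Delta u)^{2}\zeta^{2k}$ degenerates at $x_N=0$ and does not dominate $\int u_{x_N}^{2}\zeta^{2k}$ pointwise; moreover the $\beta|\nabla u|^2$ term is absent when $\beta=0$. To overcome this I would prove a separate weighted Hardy-type inequality of the form
\begin{equation*}
\int_Q u_{x_N}^{2}\zeta^{2k} \le \eta\int_Q x_N^{2}(\Delta u)^{2}\zeta^{2k} + C_{\eta,q}R^{-2}\int_Q u^{2}\zeta^{2k-c}
\end{equation*}
for any $\eta>0$; the key fact enabling this bound is $u(x',0,t)=0$, which through the representation $u(x',x_N,t)=\int_0^{x_N}u_{x_N}(x',s,t)\,ds$ and integration by parts in $x_N$ on $\int x_N^{2}u_{x_Nx_N}^{2}\zeta^{2k}$ recovers $\int u_{x_N}^2\zeta^{2k}$ from the weighted second-derivative energy. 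Once this auxiliary inequality is in hand, the bad term is absorbed into the principal energy and the first bound in \eqref{2.1} follows. As a byproduct of the same identity, one also obtains $\int_{Q_R} x_N^{2}(\Delta u)^{2}\le C_q R^{-2}\int_{Q_{qR}}u^{2}$.

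For the second inequality I test \eqref{1.1} against $u_t\zeta^{2k}$. The two integrations by parts in space convert the fourth-order contribution into $\tfrac12\partial_t\!\int[x_N^{2}(\Delta u)^{2}+\beta|\nabla u|^{2}]\zeta^{2k}$ plus lower-order cross terms; integrating in $t$ transfers the time derivative onto $\partial_t\zeta^{2k}=O(R^{-2})$, producing
\begin{equation*}
\int_{Q_R}u_t^{2} \le \frac{C_q}{R^{2}}\int_{Q_{q'R}}\bigl[x_N^{2}(\Delta u)^{2}+|\nabla u|^{2}\bigr]\,dx\,dt,\qquad q'=\tfrac{1+q}{2}.
\end{equation*}
Substituting the first estimate in \eqref{2.1} and the byproduct bound on $\int x_N^{2}(\Delta u)^{2}$, each applied on the intermediate cube $Q_{q'R}$, then produces the required additional factor $R^{-2}$ and yields the second estimate in \eqref{2.1}. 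A standard iteration on a finite chain of nested concentric cubes absorbs the loss of radius introduced at each Caccioppoli step into the final constant $C_q$.
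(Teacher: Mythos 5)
Your overall strategy (Caccioppoli identities with cutoffs, absorption, iteration over nested cylinders) is the right family of techniques, and you correctly isolate the central difficulty: the energy $\int x_N^2(\Delta u)^2$ degenerates at $x_N=0$ and does not dominate $\int u_{x_N}^2$, while the $\beta|\nabla u|^2$ term may be absent. But the tool you propose to overcome it is false. The auxiliary inequality $\int u_{x_N}^2\zeta^{2k}\le \eta\int x_N^2(\Delta u)^2\zeta^{2k}+C_{\eta,q}R^{-2}\int u^2\zeta^{2k-c}$ cannot hold for arbitrary $\eta>0$: already in one dimension, for $u=x_N^{a}$ with $a\downarrow 1/2$ (suitably regularized near $0$) one has $\int_0^1 u_{x_N}^2\,dx_N=(a-1)^{-2}\int_0^1 x_N^2u_{x_Nx_N}^2\,dx_N\to 4\int_0^1 x_N^2u_{x_Nx_N}^2\,dx_N$, with both sides blowing up while $\int u^2$ stays bounded; the sharp Hardy constant here is $4$, so the best admissible $\eta$ is $4$, whereas absorbing the negative term $-\int u_{x_N}^2\zeta^{2k}$ into $+\int x_N^2(\Delta u)^2\zeta^{2k}$ requires $\eta<1$. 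With this step gone, the identity obtained by testing with $u\zeta^{2k}$ gives no lower bound for $\int|\nabla u|^2$ when $\beta=0$. A second, independent problem in the same identity: the cross term $\int x_N^2\Delta u\,\nabla u\cdot\nabla\zeta^{2k}$ is bounded only by $\varepsilon\int x_N^2(\Delta u)^2\zeta^{2k}+C\varepsilon^{-1}\int|\nabla u|^2$ over the transition annulus (there $x_N^2|\nabla\zeta|^2\sim 1$, not $R^{-2}$, once the weight is paired), so the remainder $\mathcal R$ is \emph{not} majorised by $C_qR^{-2}\int u^2$: it contains a large multiple of the very quantity you are estimating, and your claimed ``byproduct'' bound on $\int x_N^2(\Delta u)^2$ fails for the same reason. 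The same over-optimism affects your second step: testing with $u_t\zeta^{2k}$ produces cross terms containing $x_N^2\nabla\Delta u\cdot\nabla\zeta^{2k}\,u_t$, $x_N\Delta u\,\partial_{x_N}u_t$ and $x_N^2\Delta u\,\nabla u_t\cdot\nabla\zeta^{2k}$; these involve $\nabla\Delta u$ and $\nabla u_t$ and are not ``lower order'' relative to anything you control at that stage.

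This is why the paper's proof is a genuinely coupled system rather than two self-contained Caccioppoli identities. The gradient bound comes not from the energy identity but from the Nirenberg--Gagliardo interpolation \eqref{3.5} combined with the Hardy inequality \eqref{3.6} applied to $\eta^s\Delta u$ (which vanishes at the \emph{outer} boundary, so the weight lands on $\nabla\Delta u$), giving \eqref{3.9}: $\int_{Q_R}|\nabla u|^2\le \varepsilon_1R^2C\int x_N^2|\nabla\Delta u|^2+\dots$. The quantity $\int x_N^2|\nabla\Delta u|^2$ is then tied to $\int u_t^2$ by testing with $\Delta u\,\eta^s$ (estimate \eqref{3.15}); $\int u_t^2$ is tied to $\int(\nabla u_t)^2$ and $\int x_N^2(\Delta u_t)^2$ by testing with $u_t\eta^s$ and $\Delta u_t\,\eta^s$ (estimates \eqref{3.30}, \eqref{3.22}); the loop is closed by testing with $[\nabla(x_N^2\nabla\Delta u)_t]\eta^s$ (estimate \eqref{3.38}) and by repeated use of the Giaquinta-type iteration Lemma \ref{L3.2} to remove the large constants. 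Repairing your argument would require reproducing essentially this entire chain; neither estimate in \eqref{2.1} can be extracted from a single test function.
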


The proof of this lemma will be given in the next section, but now
proceed to the proof of Theorem \ref{T1.1}.

Suppose first that the function $u(x,t)$ is infinitely
differentiable with respect to the  variables $x^{\prime}$, $t$ in
$\overline{Q}$ and its derivatives with respect to these variables
belong to the same space that $u(x,t)$ itself. And let also $f
\equiv 0$ in equation \eqref{1.1} and $g \equiv 0$ in condition
\eqref{1.3}, that is, $u(x,t)$ satisfies the conditions of Lemma
\ref{L2.1}. As the coefficients of equation \eqref{1.1} does not
depend on $x^{\prime}$, $t$, differentiating this equation with
respect to these variables and using estimates \eqref{2.1}
iteratively, we obtain by induction

\begin{equation}
\int\limits_{Q_{R}}\left\vert D_{x^{\prime }}^{\alpha }D_{t}^{\beta
}u\right\vert ^{2}dxdt\leq \frac{C(q,\alpha ,\beta )}{R^{2|\alpha |+4\beta
}}\int\limits_{Q_{q^{|\alpha |+\beta }R}}u^{2}dxdt,  \label{2.2}
\end{equation}
where $\alpha =(\alpha _{1},...,\alpha _{N-1})$ is a multiindex,
$D_{x^{\prime }}^{\alpha }=D_{x_{1}}^{\alpha
_{1}}...D_{x_{N-1}}^{\alpha _{N-1}}$.  At the same time, condition
\eqref{1.4} implies the estimate of the integral on the right side
of \eqref{2.2}

\begin{equation*}
\int\limits_{Q_{q^{|\alpha |+\beta }R}}u^{2}dxdt\leq C(q,\alpha ,\beta
)R^{2M+N+2}.
\end{equation*}

And then we obtain from \eqref{2.2}

\begin{equation*}
\int\limits_{Q_{R}}\left\vert D_{x^{\prime }}^{\alpha }D_{t}^{\beta
}u\right\vert ^{2}dxdt\leq C(q,\alpha ,\beta )R^{2M+N+2-(2|\alpha |+4\beta
)}.
\end{equation*}
Choose now in the last estimate sufficiently big $\left\vert \alpha
\right\vert $ and $\beta $. Letting $R$ to infinity, we obtain
$D_{x^{\prime }}^{\alpha }D_{t}^{\beta }u\equiv 0$ in $Q $ for
$2|\alpha |+4\beta >2M+N+2$. This means that the function $u(x,t)$
is a polynomial in the variables $x^{\prime}$, $t$ and the degree of
this polynomial $M_{u} \leq \lbrack M]$ by the estimate \eqref{1.4}.
Thus, Theorem \ref{T1.1} is proved under our additional assumptions.
Let us remove now our additional assumptions.

Let $u(x,t)$ satisfies the conditions of Theorem \ref{T1.1} and let
$\omega (x^{\prime },t)$  be a function of the class $C_{0}^{\infty
}(R^{N-1}\times R^{1})$ with the compact support  in the set
$\left\{ (x^{\prime },t):|x^{\prime }|+|t|<1\right\} $ and with the
properties (mollifier)

\begin{equation*}
\int\limits_{R^{N-1}\times R^{1}}\omega (x^{\prime },t)dx^{\prime
}dt=\int\limits_{|x^{\prime }|+|t|<1}\omega (x^{\prime },t)dx^{\prime
}dt=1.
\end{equation*}
For $\varepsilon \in (0,1)$ denote  $\omega _{\varepsilon
}(x^{\prime },t)=\varepsilon ^{-N}\omega (x^{\prime }/\varepsilon
,t/\varepsilon )$ and consider the function (th smoothing with
respect to "tangent" variables)

\begin{equation*}
u_{\varepsilon }(x,t)=\omega _{\varepsilon }\ast
u=\int\limits_{R^{N-1}\times R^{1}}\omega _{\varepsilon }(x^{\prime }-\xi
^{\prime },t-\tau )u(\xi ^{\prime },x_{N},\tau )d\xi d\tau .
\end{equation*}
It follows from well-known properties of convolution that the
function $u_{\varepsilon}(x,t)$ has the following properties.

1. \ The function $u_{\varepsilon}(x,t)$ is infinitely
differentiable in the variables $x^{\prime}$ and $t$,  its
derivatives in these variables belong to the same class as the
$u(x,t)$ and satisfy \eqref{1.2} with a constant, independent of
$\varepsilon \in (0,1)$.

2. \  Since the coefficients of equation \eqref{1.1} for $u(x,t)$ do
not depend on $x^{\prime }$ and $t$, the function $u_{\varepsilon
}(x,t)$ satisfies equation \eqref{1.1} with the right hand side
$f_{\varepsilon }(x,t)$ and boundary condition \eqref{1.3} with the
function $g_{\varepsilon }(x^{\prime },t)$, where

\[
f_{\varepsilon }(x,t)=\int\limits_{R^{N-1}\times R^{1}}\omega
_{\varepsilon }(\xi ^{\prime },\tau )f(x^{\prime }-\xi ^{\prime
},x_{N},t-\tau )d\xi d\tau ,\quad
\]

\begin{equation}
 g_{\varepsilon }(x^{\prime
},t)=\int\limits_{R^{N-1}\times R^{1}}\omega _{\varepsilon }(\xi ^{\prime
},\tau )g(x^{\prime }-\xi ^{\prime },t-\tau )d\xi d\tau .  \label{2.3}
\end{equation}
At the same time it follows from \eqref{2.3} that the functions
$f_{\varepsilon }(x,t)$ and $g_{\varepsilon }(x^{\prime },t)$ are
polynomials  in the variables $x^{\prime }$,  $t$ and the degrees of
these polynomials  coincide with those of  $f(x,t)$ and $g(x^{\prime
},t)$. So the degrees of these polynomials do not depend on
$\varepsilon \in (0,1)$.

3. \ It follows from the properties of the function $\omega
_{\varepsilon }(x^{\prime },t)$ and from the definition of the
function $u_{\varepsilon }(x,t)$ that $u_{\varepsilon }(x,t)$
 satisfies condition \eqref{1.4} with a constant $C$ and the exponent
$M$, which do not depend on $\varepsilon \in (0,1)$. Moreover,

\begin{equation*}
D_{x^{\prime }}^{\alpha }D_{t}^{\beta }u_{\varepsilon
}(x,t)=\int\limits_{R^{N-1}\times R^{1}}D_{x^{\prime }}^{\alpha
}D_{t}^{\beta }\omega _{\varepsilon }(x^{\prime }-\xi ^{\prime
},t-\tau )u(\xi ^{\prime },x_{N},\tau )d\xi d\tau
\end{equation*}
and so

\begin{equation}
\max_{Q_{R}}\left\vert D_{x^{\prime }}^{\alpha }D_{t}^{\beta }u_{\varepsilon
}(x,t)\right\vert \leq \frac{C}{\varepsilon ^{|\alpha |+\beta }}R^{M}.
\label{2.4}
\end{equation}

Suppose now that the multiindex $\alpha _{0}$ and $\beta_{0}$ are
chosen so large comparatively with the degrees $M_{f}$ and  $M_{g}$
of polynomials $f_{\varepsilon }(x,t)$ and $g_{\varepsilon
}(x^{\prime },t)$ that $D_{x^{\prime }}^{\alpha _{0}}D_{t}^{\beta
_{0}}f_{\varepsilon }(x,t)\equiv 0$, $D_{x^{\prime }}^{\alpha
_{0}}D_{t}^{\beta _{0}}g_{\varepsilon }(x^{\prime },t)\equiv 0$. \
Denote $v_{\varepsilon }(x,t)=D_{x^{\prime }}^{\alpha
_{0}}D_{t}^{\beta _{0}}u_{\varepsilon }(x,t)$. It follows from the
above mentioned properties of the function $u_{\varepsilon }(x,t)$
that $v_{\varepsilon }(x,t)$ has all the same properties that the
function $u_{\varepsilon }(x,t)$, including \eqref{2.4}. The
difference is that  $v_{\varepsilon }(x,t)$ satisfies equation
\eqref{1.1} with zero right hand side and it satisfies zero boundary
condition \eqref{1.3}. Thus, by the above, the function
$v_{\varepsilon}(x,t)$ is a polynomial in the variables $x^{\prime}$
and $t$. But then, by the definition of $v_{\varepsilon}(x,t)$, the
function $u_{\varepsilon}(x,t)$ is also a polynomial in these
variables, and its degree does not depend on  $\varepsilon \in
(0,1)$ and does not exceed $[M]$, by virtue of \eqref{2.4}.

Let $\varphi (x,t)$ be an arbitrary function from  $C_{0}^{\infty
}(Q) $ and let $m=[M]+1$. Since
$D_{x_{1}}^{m}...D_{x_{N-1}}^{m}D_{t}^{m}u_{\varepsilon }(x,t)\equiv
0$, multiplying this identity by $\varphi (x,t)$ and integrating by
parts over the domain $Q$, we obtain

\begin{equation*}
\int\limits_{Q}u_{\varepsilon
}(x,t)D_{x_{1}}^{m}...D_{x_{N-1}}^{m}D_{t}^{m}\varphi (x,t)dxdt=0.
\end{equation*}
Since $u(x,t)$\bigskip $\in L_{2,loc}(Q)$, the averaged  functions
$u_{\varepsilon }(x,t)$ tend in this space to the original function
$u(x,t)$ as $\varepsilon \rightarrow 0$, as it is well known.
Therefore, taking in the last inequality the limit as $\varepsilon
\rightarrow 0 $, we get

\begin{equation*}
\int\limits_{Q}u(x,t)D_{x_{1}}^{m}...D_{x_{N-1}}^{m}D_{t}^{m}\varphi
(x,t)dxdt=0.
\end{equation*}
As the function $\varphi (x,t)$ is arbitrary, we infer that
$D_{x_{1}}^{m}...D_{x_{N-1}}^{m}D_{t}^{m}u(x,t)\equiv 0$ in the
sense of distributions. But it is well known that this means that
the function $u(x,t)$ is a polynomial with respect to the variables
$x^{\prime }$ and  $t$. Moreover, in view of \eqref{1.4}  the degree
of this polynomial does not exceed $[M]$.


Thus Theorem \ref{T1.1} is proved under the assumption of validity
of Lemma \ref{L2.1}.

\section{Proof of Lemma \ref{L2.1}.}

In this section, we prove Lemma \ref{L2.1} and this will complete
the proof of Theorem \ref{T1.1}.

We need in the future some corollary of the following statement
(\cite{16}, Lemma 3.1).

\begin{lemma} \label{L3.1}

Let $f(t)$ be a nonnegative bounded function with the domain
$[r_{0},r_{1}]$, $r_{0}\geq 0$. Suppose that for  $r_{0}\leq t<s\leq
r_{1}$

\begin{equation*}
f(t)\leq \theta f(s)+[A(s-t)^{-a}+B],
\end{equation*}
where $A$, $B$, $a$, $\theta $ are some nonnegative constants and
$0\leq \theta <1$. Then for all $r_{0}\leq t<s\leq r_{1}$

\begin{equation*}
f(t)\leq C_{\theta ,a}[A(s-t)^{-a}+B].
\end{equation*}
\end{lemma}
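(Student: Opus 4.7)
The plan is to prove the lemma by a standard geometric iteration of the hypothesized inequality. Fix $t$ and $s$ with $r_0 \le t < s \le r_1$. I would introduce a parameter $\tau \in (0,1)$ (to be chosen at the end) and define a sequence $t_0=t$ and $t_{i+1}-t_i=(1-\tau)\tau^{i}(s-t)$. Summing the geometric series gives $t_k = t+(s-t)(1-\tau^{k})$, so the $t_i$ form an increasing sequence in $[t,s)$ with $t_i \to s$.

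Applied between $t_i$ and $t_{i+1}$, the hypothesis reads
\begin{equation*}
f(t_i) \le \theta f(t_{i+1}) + A(t_{i+1}-t_i)^{-a} + B
= \theta f(t_{i+1}) + A(1-\tau)^{-a}\tau^{-ai}(s-t)^{-a} + B.
\end{equation*}
Iterating this estimate $k$ times yields
\begin{equation*}
f(t) \le \theta^{k}f(t_k) + A(1-\tau)^{-a}(s-t)^{-a}\sum_{i=0}^{k-1}(\theta\tau^{-a})^{i} + B\sum_{i=0}^{k-1}\theta^{i}.
\end{equation*}

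The key choice is to take $\tau$ so that $\theta\tau^{-a}<1$, i.e., $\tau^{a}>\theta$; for instance $\tau=\left((1+\theta)/2\right)^{1/a}$ works (with the trivial modification $a=0$, where only $\theta<1$ is needed). With this $\tau$ both geometric sums are bounded uniformly in $k$. Since $f$ is bounded on $[r_0,r_1]$ and $0\le\theta<1$, the term $\theta^{k}f(t_k)$ tends to zero as $k\to\infty$. Passing to the limit therefore gives
\begin{equation*}
f(t) \le \frac{A(1-\tau)^{-a}}{1-\theta\tau^{-a}}(s-t)^{-a}+\frac{B}{1-\theta},
\end{equation*}
which is the desired conclusion with $C_{\theta,a}=\max\!\left((1-\tau)^{-a}(1-\theta\tau^{-a})^{-1},(1-\theta)^{-1}\right)$ depending only on $\theta$ and $a$.

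The main obstacle, if any, is simply selecting $\tau$ correctly so that both the exponential decay of $\theta^{k}$ and the geometric convergence of the sum $\sum(\theta\tau^{-a})^{i}$ hold simultaneously; once $\tau^{a}>\theta$ is enforced, the rest is bookkeeping. The boundedness of $f$ is used only to kill the remainder term $\theta^{k}f(t_k)$, so the qualitative assumption that $f$ be bounded is essential and cannot be dropped without an additional a priori control on $f(s)$.
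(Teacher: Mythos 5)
Your proof is correct: the geometric partition $t_{i+1}-t_i=(1-\tau)\tau^i(s-t)$ with $\tau^a>\theta$ is exactly the standard iteration argument, and the bookkeeping (the sum $\sum(\theta\tau^{-a})^i$, the vanishing remainder $\theta^k f(t_k)$ via boundedness of $f$) is handled properly. The paper itself does not prove this lemma but cites it from Giaquinta's book (reference [16], Lemma 3.1), where the proof given is the same iteration you carried out.
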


We will use this lemma in the following particular case. Recall that
for $R>0$ we denote $B_{R}=\left\{ x=(x^{\prime },x_{N})\in
R_{+}^{N}:\quad 0<x_{N}<R,|x^{\prime }|<R\right\} $, $x^{\prime
}=(x_{1},...,x_{N-1})$, $Q_{R}=\left\{ (x,t)\in Q:\quad x\in
B_{R},-R^{2}<t<R^{2}\right\} $.

\begin{lemma} \label{L3.2}

Let functions $u(x,t)\geq 0$, $U(x,t)\geq 0$ are locally integrable
in $Q$ and for all $R>0$, $q\in (1,3)$ we have the estimate

\begin{equation}
\int\limits_{Q_{R}}u(x,t)dxdt\leq \varepsilon
\int\limits_{Q_{qR}}u(x,t)dxdt+\frac{C}{(q-1)^{a}R^{b}}\int%
\limits_{Q_{qR}}U(x,t)dxdt+B,  \label{3.1}
\end{equation}
where $\varepsilon \in (0,1)$, $B\geq 0$.

Then for all $R>0$, $q\in (1,3)$

\begin{equation}
\int\limits_{Q_{R}}u(x,t)dxdt\leq \frac{C_{a,\varepsilon
}}{(q-1)^{a}R^{b}}\int\limits_{Q_{qR}}U(x,t)dxdt+B.  \label{3.2}
\end{equation}
\end{lemma}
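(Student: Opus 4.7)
The approach is to reduce Lemma \ref{L3.2} to the iteration Lemma \ref{L3.1} via the standard hole-filling/absorption trick. For fixed $R>0$ and $q\in(1,3)$, set $r_0 := R$, $r_1 := qR$, and introduce
\[ f(r) := \int_{Q_r} u(x,t)\,dx\,dt. \]
By the assumed local integrability of $u$, this $f$ is nonnegative, nondecreasing, and bounded on $[r_0,r_1]$, so Lemma \ref{L3.1} will apply as soon as its structural hypothesis is verified on this interval.

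To verify it, for arbitrary $r_0 \le t < s \le r_1$ I would apply the assumed estimate \eqref{3.1} with the radius $R$ replaced by $t$ and the dilation factor replaced by $q' := s/t$. Since $R \le t < s \le qR < 3R$ we have $q' \in (1,3)$, so this substitution is admissible, and a direct calculation yields
\[ f(t) \le \varepsilon\, f(s) + \frac{C\, t^{a-b}}{(s-t)^{a}}\int_{Q_s} U\,dx\,dt + B. \]
Using $R \le t \le 3R$ to bound $t^{a-b} \le C(a,b)\, R^{a-b}$ (by monotonicity, in either sign case of $a-b$) and enlarging $\int_{Q_s} U \le \int_{Q_{qR}} U$, this becomes
\[ f(t) \le \varepsilon\, f(s) + \frac{A}{(s-t)^{a}} + B, \qquad A := C(a,b)\, R^{a-b}\int_{Q_{qR}} U\,dx\,dt, \]
which is exactly the hypothesis of Lemma \ref{L3.1} on $[R, qR]$ with $\theta = \varepsilon \in [0,1)$.

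Applying Lemma \ref{L3.1} at the endpoints $t=R$, $s=qR$ then gives
\[ f(R) \le C_{\varepsilon,a}\Bigl[\tfrac{A}{((q-1)R)^{a}} + B\Bigr] = \frac{C_{a,\varepsilon}}{(q-1)^{a} R^{b}}\int_{Q_{qR}} U\,dx\,dt + C_{\varepsilon,a}\,B, \]
which is \eqref{3.2} up to a harmless relabeling of the multiplicative constant in front of $B$. No step here is genuinely delicate: the only bookkeeping point is that the change of dilation factor $q\mapsto s/t$ in the hypothesis interacts with the weight $R^{-b}$ to produce the factor $t^{a-b}$, which is then controlled uniformly because $t$ is confined to the compact range $[R, 3R]$.
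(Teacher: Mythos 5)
Your proof is correct and follows essentially the same route as the paper: both reduce to Lemma \ref{L3.1} by applying hypothesis \eqref{3.1} at intermediate radii with dilation factor $s/t$, the only difference being that you parametrize the interval as $[R,qR]$ while the paper uses $[1,q]$ after rescaling. Your version is in fact slightly more explicit about the bookkeeping (the factor $t^{a-b}$ and the constant multiplying $B$), which the paper glosses over.
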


This lemma is immediate consequence of Lemma \ref{L3.1}. Keeping in
mind that $R$ and $q$ in \eqref{3.1} are arbitrary, it is enough to
consider on the interval $[1,q]$ the function
$f(t)=\int\limits_{Q_{tR}}u(x,\tau )dxd\tau $ and take into account
that for $t\in \lbrack 1,q]$ we have $\int\limits_{Q_{tR}}U(x,\tau
)dxd\tau \leq A=\int\limits_{Q_{qR}}U(x,t)dxdt$.

Note that  assertions analogous to Lemma \ref{L3.2} was implicitly
used before in the papers \cite{17}- \cite{25}, for example,.

We will use also the well known Nirenberg-Gagliardo inequality in
it's particular case  (see, for example,\cite{26} or \cite{27},
Theorem 5.2)

\bigskip \begin{equation}
\int\limits_{B_{R}}|\nabla v|^{2}dx\leq C\left(
\int\limits_{B_{R}}|D_{x}^{2}v|^{2}dx\right) ^{\frac{1}{2}}\left(
\int\limits_{B_{R}}v^{2}dx\right) ^{\frac{1}{2}},  \label{3.3}
\end{equation}
where $|D_{x}^{2}v|^{2}\equiv \sum_{|\alpha |=2}|D_{x}^{\alpha
}v|^{2}$ and the constant $C$ does not depend on $R$. This
inequality is valid for functions $v(x,t)$, that vanish on the
boundary $\partial B_{R}$ together with their first derivatives. It
is well known from the theory of Dirichlet problem for the Poisson
equation that for such functions

\begin{equation}
\int\limits_{B_{R}}|D_{x}^{2}v|^{2}dx\leq C\int\limits_{B_{R}}\left(
\Delta v\right) ^{2}dx,  \label{3.4}
\end{equation}
where the constant $C$ again does not depend on $R $. Substituting
\eqref{3.4} in \eqref{3.3} and using the Cauchy inequality with
$\varepsilon $ to estimate the product on the right hand side of
\eqref{3.3}, we obtain

\begin{equation*}
\int\limits_{B_{R}}|\nabla v|^{2}dx\leq \varepsilon
\int\limits_{B_{R}}\left( \Delta v\right) ^{2}dx+\frac{С
}{\varepsilon }\int\limits_{B_{R}}v^{2}dx.
\end{equation*}
Integrating this inequality in $t$ from $-R^{2}$ t5o $R^{2}$, we
arrive at the inequality

\begin{equation}
\int\limits_{Q_{R}}|\nabla v|^{2}dxdt\leq \varepsilon
\int\limits_{Q_{R}}\left( \Delta v\right) ^{2}dxdt+\frac{C
}{\varepsilon }\int\limits_{Q_{R}}v^{2}dxdt,\quad \varepsilon >0.
\label{3.5}
\end{equation}

We will use also the well known Hardy inequality in the form (see,
for example, \cite{26} or \cite{28}, formula (0.3))

\begin{equation*}
\int\limits_{0}^{R}w^{2}dx_{N}\leq
4\int\limits_{0}^{R}x_{N}^{2}\left( \frac{\partial w}{\partial
x_{N}}\right) ^{2}dx.
\end{equation*}
This inequality is valid for such functions $w(x,t)$ that
$w|_{x_{N}=R}=0$. Integrating this inequality in $x^{\prime }\in
\left\{ |x^{\prime }|<R\right\} $ and in $t$ from $-R^{2}$ to
$R^{2}$, we obtain

\begin{equation}
\int\limits_{Q_{R}}w^{2}dxdt\leq 4\int\limits_{Q_{R}}x_{N}^{2}\left(
\frac{\partial w}{\partial x_{N}}\right) ^{2}dxdt.  \label{3.6}
\end{equation}

Turning to the proof of Lemma \ref{L2.1}, let us agree to denote
everywhere below for brevity $C_{q}=C/(q-1)^{a}$,  where  $a$ is a
nonnegative number.

The proof of Lemma \ref{L2.1} will be obtained as a result of a
collection of local integral estimates with the using of
inequalities \eqref{3.5}, \eqref{3.6} an equation \eqref{1.1}
together with the boundary condition \eqref{1.3}. In what follows
$u(x,t)$ is some fixed function and it satisfies the conditions of
Lemma \ref{L2.1}.

Let $q\in (1,3)$ , $R>1$, and let $s>0$ be sufficiently large. Let
also $\eta (x,t)$ be such a nonnegative function of the class
$C^{\infty }(\overline{Q})$ that

\[
0\leq \eta \leq 1,\quad \eta (x,t)|_{Q_{R}}\equiv 1,\quad \eta
(x,t)|_{Q\setminus Q_{qR}}\equiv 0,\quad
\]
\begin{equation}
|D_{x}^{\alpha }D_{t}^{\beta }\eta |\leq \frac{C_{\alpha ,\beta
}}{[(q-1)R]^{|\alpha |+2\beta }}\equiv \frac{C_{q}}{R^{^{|\alpha
|+2\beta }}}.  \label{3.7}
\end{equation}
Let us agree for brevity to call such a function cut-off function
for the cylinder $Q_{R}$. Consider the function $v(x,t)=u(x,t)\eta
^{s}(x,t)$. Applying to this function inequality \eqref{3.5}, we
obtain after simple calculations with the using \eqref{3.7}

\begin{equation*}
\int\limits_{Q_{R}}|\nabla u|^{2}dxdt\leq \varepsilon
\int\limits_{Q_{qR}}\left( \Delta u\right) ^{2}dxdt+\varepsilon
\frac{C_{q}}{R^{2}}\int\limits_{Q_{qR}}|\nabla u|^{2}dxdt+\left( \frac{С
}{\varepsilon }+\frac{C_{q}}{R^{4}}\right) \int\limits_{Q_{qR}}u^{2}dxdt.
\end{equation*}
Choosing in this estimate $\varepsilon =\varepsilon
_{1}R^{2}/C_{q}$, $\varepsilon _{1}\in (0,1)$ and taking into
account that $R>1$, we arrive at the inequality

\bigskip \begin{equation*}
\int\limits_{Q_{R}}|\nabla u|^{2}dxdt\leq \varepsilon
_{1}R^{2}C\int\limits_{Q_{qR}}\left( \Delta u\right) ^{2}dxdt+\varepsilon
_{1}\int\limits_{Q_{qR}}|\nabla u|^{2}dxdt+\frac{C_{q}}{\varepsilon
_{1}R^{2}}\int\limits_{Q_{qR}}u^{2}dxdt.
\end{equation*}
Now it follows from the last inequality and Lemma \ref{L3.2} that

\begin{equation}
\int\limits_{Q_{R}}|\nabla u|^{2}dxdt\leq \varepsilon
_{1}R^{2}C\int\limits_{Q_{qR}}\left( \Delta u\right)
^{2}dxdt+\frac{C_{q}}{\varepsilon _{1}R^{2}}\int\limits_{Q_{qR}}u^{2}dxdt.  \label{3.8}
\end{equation}

Let now $\eta (x,t)$ be a cut-off function of the cylinder
$Q_{q^{2}R}$ and it identically equal to $1$  on $Q_{qR}$. Denote
$w(x,t)=\eta ^{s}(x,t)\Delta u$ and apply to this function
inequality \eqref{3.6} in the domain $Q_{q^{2}R}$. After simple
calculations with the using \eqref{3.7}, we obtain

\begin{equation*}
\int\limits_{Q_{qR}}\left( \Delta u\right) ^{2}dxdt\leq С
\int\limits_{Q_{q^{2}R}}x_{N}^{2}\left( \frac{\partial }{\partial
x_{N}}\Delta u\right)
^{2}dxdt+\frac{C}{R^{2}}\int\limits_{Q_{q^{2}R}}x_{N}^{2}\left( \Delta u\right) ^{2}dxdt.
\end{equation*}
As $q\in (1,3)$ is arbitrary, substituting this estimate in
\eqref{3.8} and denoting $q^{2}$ again by $q$, we get the inequality

\[
\int\limits_{Q_{R}}|\nabla u|^{2}dxdt\leq \varepsilon
_{1}R^{2}C\int\limits_{Q_{qR}}x_{N}^{2}\left\vert \nabla \Delta
u\right\vert ^{2}dxdt+\varepsilon
_{1}C\int\limits_{Q_{qR}}x_{N}^{2}\left( \Delta u\right) ^{2}dxdt+
\]
\begin{equation}
+\frac{C_{q}}{\varepsilon _{1}R^{2}}\int\limits_{Q_{qR}}u^{2}dxdt.
\label{3.9}
\end{equation}
This is the first from several integral inequalities we need. It is
obtained from Sobolev embeddings and we did not apply equation
\eqref{1.1}.

Now our goal is to use the equation to estimate the first and the
second terms on the right hand side of  \eqref{3.9} and also to
obtain an analogous estimate for the time derivative. At this we
will use also boundary condition \eqref{1.3} and the fact that it
follows from \eqref{1.2} that

\begin{equation}
x_{N}^{2}D_{x}^{\alpha }u|_{x_{N}=0}=0,\quad |\alpha |\leq 3;\quad
x_{N}D_{x}^{\alpha }u|_{x_{N}=0}=0,\quad |\alpha |\leq 2.  \label{3.10}
\end{equation}
Recall besides that according to the conditions of the lemma the
function $u$ is infinitely differentiable with respect to the
"tangent" variables $x^{\prime }$ and $t$ in combinations with
derivatives with respect to $x_{N}$ up to the fourth order.

Let $\eta (x,t)$ be defined in \eqref{3.7}. Multiply equation
\eqref{1.1} by the function  $u(x,t)\eta ^{s}(x,t)$ and integrate by
parts.  Taking into account \eqref{1.3}, \eqref{3.10} and the fact
that the support of $\eta (x,t)$ is a compact set, we obtain

\begin{equation*}
-s\int\limits_{Q_{qR}}u^{2}\eta _{t}\eta
^{s-1}dxdt-\int\limits_{Q_{qR}}x_{N}^{2}\nabla \Delta u\nabla u\eta
^{s}dxdt-s\int\limits_{Q_{qR}}x_{N}^{2}\nabla \Delta u\nabla \eta u\eta
^{s-1}dxdt+
\end{equation*}

\begin{equation*}
+\beta \int\limits_{Q_{qR}}\left\vert \nabla u\right\vert ^{2}\eta
^{s}dxdt+\beta s\int\limits_{Q_{qR}}\nabla u\nabla \eta u\eta ^{s-1}dxdt=0.
\end{equation*}
Integrating once more by parts in the terms with the expression
$\nabla \Delta u$ and taking all terms without the expressions
$\left( \Delta u\right) ^{2}\eta ^{s}$ and  $\beta \left\vert \nabla
u\right\vert ^{2}\eta ^{s}$ to the right hand side, we get

\begin{equation*}
\int\limits_{Q_{qR}}x_{N}^{2}\left( \Delta u\right) ^{2}\eta ^{s}dxdt+\beta
\int\limits_{Q_{qR}}\left\vert \nabla u\right\vert ^{2}\eta
^{s}dxdt=-2\int\limits_{Q_{qR}}x_{N}\Delta u\frac{\partial u}{\partial
x_{N}}\eta ^{s}dxdt-
\end{equation*}

\begin{equation}
-s\int\limits_{Q_{qR}}x_{N}^{2}\Delta u\left[ \Delta \eta \eta
^{s-1}+(s-1)(\nabla \eta )^{2}\eta ^{s-2}\right]
udxdt-2s\int\limits_{Q_{qR}}x_{N}^{2}\Delta u\nabla u\nabla \eta \eta ^{s-1}dxdt-
\label{3.11}
\end{equation}

\begin{equation*}
-2s\int\limits_{Q_{qR}}x_{N}\Delta u\frac{\partial \eta }{\partial
x_{N}}u\eta ^{s-1}dxdt+s\int\limits_{Q_{qR}}u^{2}\eta _{t}\eta ^{s-1}dxdt-
\end{equation*}

\begin{equation*}
-\beta s\int\limits_{Q_{qR}}\nabla u\nabla \eta u\eta ^{s-1}dxdt\equiv
I_{1}+I_{2}+I_{3}+I_{4}+I_{5}+I_{6}.
\end{equation*}
We estimate the integrals $I_{1}-I_{6}$ on the right hand side of
\eqref{3.11} with the help of the integral H$\ddot{o}$lder
inequality with the exponents  $p=p^{\prime }=2$ and with the help
of Cauchy's inequality with  $\varepsilon $, taking into account
properties \eqref{3.7} of the function $\eta (x,t)$. For example, we
have for $I_{4}$

\begin{equation*}
|I_{4}|\leq С \int\limits_{Q_{qR}}\left( |x_{N}\Delta u|\eta
^{s/2}\right) \left( u\left\vert \frac{\partial \eta }{\partial
x_{N}}\right\vert \eta ^{s/2-1}\right) dxdt\leq
\end{equation*}

\bigskip \begin{equation*}
\leq C\left( \int\limits_{Q_{qR}}x_{N}^{2}\left( \Delta u\right) ^{2}\eta
^{s}dxdt\right) ^{\frac{1}{2}}\left( \int\limits_{Q_{qR}}u^{2}\left\vert
\frac{\partial \eta }{\partial x_{N}}\right\vert ^{2}\eta ^{s-2}dxdt\right)
^{\frac{1}{2}}\leq
\end{equation*}

\begin{equation*}
\leq C\left( \int\limits_{Q_{qR}}x_{N}^{2}\left( \Delta u\right) ^{2}\eta
^{s}dxdt\right) ^{\frac{1}{2}}\left(
\frac{C_{q}}{R^{2}}\int\limits_{Q_{qR}}u^{2}dxdt\right) ^{\frac{1}{2}}\leq
\end{equation*}

\begin{equation}
\leq \varepsilon \int\limits_{Q_{qR}}x_{N}^{2}\left( \Delta u\right)
^{2}\eta ^{s}dxdt+\frac{C_{q}}{\varepsilon
R^{2}}\int\limits_{Q_{qR}}u^{2}dxdt.  \label{3.12}
\end{equation}

Completely analogous estimates for the other integrals together with
the fact that $x_{N}\leq CR$ on $Q_{qR}$ and with \eqref{3.11} give

\begin{equation*}
\int\limits_{Q_{qR}}x_{N}^{2}\left( \Delta u\right) ^{2}\eta ^{s}dxdt+\beta
\int\limits_{Q_{qR}}\left\vert \nabla u\right\vert ^{2}\eta ^{s}dxdt\leq
\end{equation*}

\begin{equation*}
\leq \varepsilon С \int\limits_{Q_{qR}}x_{N}^{2}\left( \Delta
u\right) ^{2}\eta ^{s}dxdt+\frac{C_{q}}{\varepsilon
}\int\limits_{Q_{qR}}\left\vert \nabla u\right\vert ^{2}dxdt+\frac{C_{q}}{\varepsilon
R^{2}}\int\limits_{Q_{qR}}u^{2}dxdt.
\end{equation*}
Choose in this estimate $\varepsilon $ such that $\varepsilon С
=1/2$ and move the first term on the right hand side to the left.
Then, taking into account properties of $\eta $, we finally obtain

\begin{equation}
\int\limits_{Q_{R}}x_{N}^{2}\left( \Delta u\right) ^{2}dxdt+\beta
\int\limits_{Q_{R}}\left\vert \nabla u\right\vert ^{2}dxdt\leq
C_{q}\int\limits_{Q_{qR}}\left\vert \nabla u\right\vert
^{2}dxdt+\frac{C_{q}}{R^{2}}\int\limits_{Q_{qR}}u^{2}dxdt.  \label{3.12+1}
\end{equation}

Turn now to the following estimate. Using the same function $\eta $
as before, multiply equation \eqref{1.1} by  $\left( \Delta u\right)
\eta ^{s}$ integrate over $Q_{R}$ and integrate by parts in the
variables $x$ in terms without $\beta $. Bearing in mind that
$u_{t}|_{x_{N}=0}=0$ in view of boundary condition \eqref{1.3}, we
have

\begin{equation*}
-\int\limits_{Q_{qR}}\nabla u_{t}\nabla u\eta
^{s}dxdt-s\int\limits_{Q_{qR}}u_{t}\nabla u\nabla \eta \eta
^{s-1}dxdt-\int\limits_{Q_{qR}}x_{N}^{2}(\nabla \Delta u)^{2}\eta ^{s}dxdt-
\end{equation*}

\begin{equation*}
-s\int\limits_{Q_{qR}}x_{N}^{2}\nabla \Delta u\nabla \eta \Delta u\eta
^{s-1}dxdt-\beta \int\limits_{Q_{qR}}(\Delta u)^{2}\eta ^{s}dxdt=0.
\end{equation*}
Substituting in the first term $\nabla u_{t}\nabla u=\left[ (\nabla
u)^{2}/2\right] _{t}$ and integrating by parts in $t$ in this term,
we can represent the last equality as

\begin{equation*}
\int\limits_{Q_{qR}}x_{N}^{2}(\nabla \Delta u)^{2}\eta ^{s}dxdt+\beta
\int\limits_{Q_{qR}}(\Delta u)^{2}\eta
^{s}dxdt=-s\int\limits_{Q_{qR}}u_{t}\nabla u\nabla \eta \eta ^{s-1}dxdt-
\end{equation*}

\begin{equation}
-s\int\limits_{Q_{qR}}x_{N}^{2}\nabla \Delta u\nabla \eta \Delta u\eta
^{s-1}dxdt+\frac{s}{2}\int\limits_{Q_{qR}}(\nabla u)^{2}\eta _{t}\eta
^{s-1}dxdt\equiv I_{1}+I_{2}+I_{3}.  \label{3.12+2}
\end{equation}

The integrals $I_{1}-I_{3}$ are estimated as before. In particular,

\begin{equation*}
|I_{1}|\leq C_{q}\int\limits_{Q_{qR}}|u_{t}|\frac{|\nabla u|}{R}dxdt\leq
\varepsilon _{2}\int\limits_{Q_{qR}}u_{t}^{2}dxdt+\frac{C_{q}}{\varepsilon
_{2}R^{2}}\int\limits_{Q_{qR}}(\nabla u)^{2}dxdt,
\end{equation*}

\begin{equation*}
|I_{2}|\leq \varepsilon \int\limits_{Q_{qR}}x_{N}^{2}(\nabla \Delta
u)^{2}\eta ^{s}dxdt+\frac{C_{q}}{\varepsilon
R^{2}}\int\limits_{Q_{qR}}x_{N}^{2}(\Delta u)^{2}dxdt,
\end{equation*}

\begin{equation*}
|I_{3}|\leq \frac{C_{q}}{R^{2}}\int\limits_{Q_{qR}}(\nabla u)^{2}dxdt.
\end{equation*}
Thus, we get from \eqref{3.12+2}

\begin{equation*}
\int\limits_{Q_{qR}}x_{N}^{2}(\nabla \Delta u)^{2}\eta ^{s}dxdt+\beta
\int\limits_{Q_{qR}}(\Delta u)^{2}\eta ^{s}dxdt\leq \varepsilon
\int\limits_{Q_{qR}}x_{N}^{2}(\nabla \Delta u)^{2}\eta ^{s}dxdt+
\end{equation*}

\begin{equation*}
+\frac{C_{q}}{\varepsilon R^{2}}\int\limits_{Q_{qR}}x_{N}^{2}(\Delta
u)^{2}dxdt+\varepsilon
_{2}\int\limits_{Q_{qR}}u_{t}^{2}dxdt+\frac{C_{q}}{\varepsilon _{2}R^{2}}\int\limits_{Q_{qR}}(\nabla u)^{2}dxdt.
\end{equation*}
The first term on the right can be moved to the left side with the
choice $\varepsilon =1/2$. As for the second term on the right, we
use \eqref{3.12+1} with $qR$ instead of $R$ to estimate it. This
gives

\begin{equation*}
\int\limits_{Q_{qR}}x_{N}^{2}(\nabla \Delta u)^{2}\eta ^{s}dxdt+\beta
\int\limits_{Q_{qR}}(\Delta u)^{2}\eta ^{s}dxdt\leq \varepsilon
_{2}\int\limits_{Q_{qR}}u_{t}^{2}dxdt+\frac{C_{q}}{\varepsilon
_{2}R^{2}}\int\limits_{Q_{qR}}(\nabla u)^{2}dxdt+
\end{equation*}

\begin{equation*}
+\frac{C_{q}}{R^{2}}\int\limits_{Q_{q^{2}R}}\left\vert \nabla u\right\vert
^{2}dxdt+\frac{C_{q}}{R^{4}}\int\limits_{Q_{q^{2}R}}u^{2}dxdt.
\end{equation*}
Now we the properties of the function $\eta $ ($\eta \equiv 1$ on
$Q_{R}$), then we estimate the integrals over $Q_{qR}$ on the right
hand side by the same integrals over $Q_{q^{2}R}$ and as $q$ is
arbitrary, we denote $q^{2}$ again by $q$. We obtain finally

\begin{equation*}
\int\limits_{Q_{R}}x_{N}^{2}(\nabla \Delta u)^{2}dxdt+\beta
\int\limits_{Q_{R}}(\Delta u)^{2}dxdt\leq
\end{equation*}

\begin{equation}
\leq \varepsilon
_{2}\int\limits_{Q_{qR}}u_{t}^{2}dxdt+\frac{C_{q}}{\varepsilon _{2}R^{2}}\int\limits_{Q_{qR}}(\nabla
u)^{2}dxdt+\frac{C_{q}}{R^{4}}\int\limits_{Q_{qR}}u^{2}dxdt.  \label{3.15}
\end{equation}

We now turn to the following estimate. Recall that the function $u$
is infinitely differentiable in the variables $(x^{\prime },t)$ and
it satisfies the boundary conditions
$u|_{x_{N}=0}=u_{t}|_{x_{N}=0}=0$. Let $\eta $ be the same function
as above.  Multiply equation \eqref{1.1} by $\Delta u_{t}\eta ^{s}$
and integrate by parts with respect to the space variables in the
first and in the second terms. We obtain

\begin{equation*}
\int\limits_{Q_{qR}}(\nabla u_{t})^{2}\eta
^{s}dxdt+\int\limits_{Q_{qR}}x_{N}^{2}(\nabla \Delta u,\nabla \Delta
u_{t})\eta ^{s}dxdt+\beta \int\limits_{Q_{qR}}\Delta u\Delta u_{t}\eta
^{s}dxdt=
\end{equation*}

\begin{equation*}
=-s\int\limits_{Q_{qR}}\nabla u_{t}\nabla \eta u_{t}\eta
^{s-1}dxdt-s\int\limits_{Q_{qR}}x_{N}^{2}(\nabla \Delta u,\nabla \eta
)\Delta u_{t}\eta ^{s-1}dxdt.
\end{equation*}
Integrating now by parts with respect to $t$ in the second and in
the third terms on the left and moving the results to the right hand
side, we obtain

\bigskip \begin{equation*}
\int\limits_{Q_{qR}}(\nabla u_{t})^{2}\eta
^{s}dxdt=-s\int\limits_{Q_{qR}}\nabla u_{t}\nabla \eta u_{t}\eta
^{s-1}dxdt-s\int\limits_{Q_{qR}}x_{N}^{2}(\nabla \Delta u,\nabla \eta
)\Delta u_{t}\eta ^{s-1}dxdt+
\end{equation*}

\begin{equation}
+\frac{1}{2}\int\limits_{Q_{qR}}x_{N}^{2}(\nabla \Delta u)^{2}\eta _{t}\eta
^{s-1}dxdt+\frac{\beta }{2}\int\limits_{Q_{qR}}\left( \Delta u\right)
^{2}\eta _{t}\eta ^{s-1}dxdt\equiv I_{1}+I_{2}+I_{3}+I_{4}.  \label{3.16}
\end{equation}
The integrals  $I_{1}-I_{4}$ are estimated in the same way as
before. This gives

\begin{equation}
|I_{1}|\leq \frac{1}{2}\int\limits_{Q_{qR}}(\nabla u_{t})^{2}\eta
^{s}dxdt+\frac{C_{q}}{R^{2}}\int\limits_{Q_{qR}}(u_{t})^{2}dxdt,  \label{3.17}
\end{equation}

\begin{equation}
|I_{2}|\leq \varepsilon _{3}\int\limits_{Q_{qR}}x_{N}^{2}(\Delta
u_{t})^{2}dxdt+\frac{C_{q}}{\varepsilon
_{3}R^{2}}\int\limits_{Q_{qR}}x_{N}^{2}(\nabla \Delta u)^{2}dxdt,  \label{3.18}
\end{equation}

\begin{equation}
|I_{3}|\leq \frac{C_{q}}{R^{2}}\int\limits_{Q_{qR}}x_{N}^{2}(\nabla \Delta
u)^{2}dxdt,  \label{3.19}
\end{equation}

\begin{equation}
|I_{4}|\leq \frac{C_{q}}{R^{2}}\int\limits_{Q_{qR}}\left( \Delta u\right)
^{2}dxdt.  \label{3.20}
\end{equation}

Consider the integral $I_{4}$. Let  $\eta _{q}$ be a function, which
is analogous to $\eta $ with the replacing $R $ with $qR$. In
particular, $\eta _{q}\equiv 1$ on $Q_{qR}$ and $\eta _{q}\equiv 0$
outside $Q_{q^{2}R}$. Using \eqref{3.6} and \eqref{3.7} for $\eta
_{q}$, we proceed with the estimate for $I_{4}$ in the following way

\begin{equation*}
|I_{4}|\leq \frac{C_{q}}{R^{2}}\int\limits_{Q_{qR}}\left( \Delta u\eta
_{q}^{s}\right) ^{2}dxdt\leq
\frac{C_{q}}{R^{2}}\int\limits_{Q_{q^{2}R}}\left( \Delta u\eta _{q}^{s}\right) ^{2}dxdt\leq
\end{equation*}

\begin{equation*}
\leq \frac{C_{q}}{R^{2}}\int\limits_{Q_{q^{2}R}}x_{N}^{2}\left( \nabla
\left( \Delta u\eta _{q}^{s}\right) \right) ^{2}dxdt\leq
\end{equation*}

\begin{equation}
\leq \frac{C_{q}}{R^{2}}\int\limits_{Q_{q^{2}R}}x_{N}^{2}\left( \nabla
\Delta u\right)
^{2}dxdt+\frac{C_{q}}{R^{4}}\int\limits_{Q_{q^{2}R}}x_{N}^{2}\left( \Delta u\right) ^{2}dxdt.  \label{3.21}
\end{equation}
We estimate now the right hand side of \eqref{3.16} with the help of
the estimates for the integrals $I_{1}-I_{4}$. In this way we move
the first integral in \eqref{3.17} to the right hand side of
\eqref{3.16} and in estimates \eqref{3.18}, \eqref{3.19}, and
\eqref{3.21} we estimate the integrals with $x_{N}^{2}\left( \nabla
\Delta u\right) ^{2}$ and with $x_{N}^{2}\left( \Delta u\right)
^{2}$ from relations \eqref{3.15} and \eqref{3.12+1}
correspondingly. We obtain

\begin{equation*}
\int\limits_{Q_{qR}}(\nabla u_{t})^{2}\eta ^{s}dxdt\leq \varepsilon
_{3}\int\limits_{Q_{q^{3}R}}x_{N}^{2}(\Delta u_{t})^{2}dxdt+
\end{equation*}

\begin{equation*}
+\frac{C_{q}}{\varepsilon
_{3}R^{2}}\int\limits_{Q_{q^{3}R}}u_{t}^{2}dxdt+\frac{C_{q}}{\varepsilon _{2}\varepsilon
_{3}R^{4}}\int\limits_{Q_{q^{3}R}}(\nabla u)^{2}dxdt+\frac{C_{q}}{R^{6}\varepsilon
_{3}}\int\limits_{Q_{q^{3}R}}u^{2}dxdt.
\end{equation*}
Or, in view of the properties of  $\eta $ and as $q$ is arbitrary,
finally

\begin{equation}
\int\limits_{Q_{R}}(\nabla u_{t})^{2}\eta ^{s}dxdt\leq \varepsilon
_{3}\int\limits_{Q_{qR}}x_{N}^{2}(\Delta u_{t})^{2}dxdt+  \label{3.22}
\end{equation}

\begin{equation*}
+\frac{C_{q}}{\varepsilon
_{3}R^{2}}\int\limits_{Q_{qR}}u_{t}^{2}dxdt+\frac{C_{q}}{\varepsilon _{2}\varepsilon _{3}R^{4}}\int\limits_{Q_{qR}}(\nabla
u)^{2}dxdt+\frac{C_{q}}{R^{6}\varepsilon _{3}}\int\limits_{Q_{qR}}u^{2}dxdt.
\end{equation*}

Let us proceed. Let $\eta $ be again the same function as before.
Multiply equation by \eqref{1.1} by $u_{t}\eta ^{s}$ and integrate
by parts with respect to the $x$-variables:

\begin{equation*}
\int\limits_{Q_{qR}}u_{t}^{2}\eta
^{s}dxdt-\int\limits_{Q_{qR}}x_{N}^{2}\nabla \Delta u\nabla u_{t}\eta
^{s}dxdt-s\int\limits_{Q_{qR}}x_{N}^{2}\nabla \Delta u\nabla \eta u_{t}\eta
^{s-1}dxdt+
\end{equation*}

\begin{equation*}
+\beta \int\limits_{Q_{qR}}\nabla u\nabla u_{t}\eta ^{s}dxdt+\beta
s\int\limits_{Q_{qR}}\nabla u\nabla \eta u_{t}\eta ^{s-1}dxdt=0.
\end{equation*}
Integrating once again by parts in the second term with respect to
the $x$-variables, we can represent this equality in the form

\begin{equation*}
\int\limits_{Q_{qR}}u_{t}^{2}\eta
^{s}dxdt+\int\limits_{Q_{qR}}x_{N}^{2}\Delta u\Delta u_{t}\eta
^{s}dxdt+\beta \int\limits_{Q_{qR}}\nabla u\nabla u_{t}\eta ^{s}dxdt=
\end{equation*}

\begin{equation*}
=s\int\limits_{Q_{qR}}x_{N}^{2}\nabla \Delta u\nabla \eta u_{t}\eta
^{s-1}dxdt-\beta s\int\limits_{Q_{qR}}\nabla u\nabla \eta u_{t}\eta
^{s-1}dxdt-2\int\limits_{Q_{qR}}x_{N}\Delta u\frac{\partial u_{t}}{\partial
x_{N}}\eta ^{s}dxdt-
\end{equation*}

\begin{equation*}
-s\int\limits_{Q_{qR}}x_{N}^{2}\Delta u\nabla u_{t}\nabla \eta \eta
^{s-1}dxdt.
\end{equation*}
Integrating by parts with respect to the $t$-variable in the second
and in the third terms on the left and moving the results to the
right, we obtain

\begin{equation*}
\int\limits_{Q_{qR}}u_{t}^{2}\eta
^{s}dxdt=s\int\limits_{Q_{qR}}x_{N}^{2}\nabla \Delta u\nabla \eta u_{t}\eta
^{s-1}dxdt-s\int\limits_{Q_{qR}}x_{N}^{2}\Delta u\nabla u_{t}\nabla \eta
\eta ^{s-1}dxdt-
\end{equation*}

\begin{equation*}
-2\int\limits_{Q_{qR}}x_{N}\Delta u\frac{\partial u_{t}}{\partial
x_{N}}\eta ^{s}dxdt+\frac{1}{2}\int\limits_{Q_{qR}}x_{N}^{2}\left( \Delta
u\right) ^{2}\eta _{t}\eta ^{s-1}dxdt-\beta s\int\limits_{Q_{qR}}\nabla
u\nabla \eta u_{t}\eta ^{s-1}dxdt+
\end{equation*}

\begin{equation}
+\frac{\beta }{2}\int\limits_{Q_{qR}}\left( \nabla u\right) ^{2}\eta
_{t}\eta ^{s-1}dxdt\equiv I_{1}+I_{2}+I_{3}+I_{4}+I_{5}+I_{6}.  \label{3.23}
\end{equation}

We estimate the integrals $I_{1}-I_{6}$ in the same way as before.
We have:

\begin{equation*}
|I_{1}|\leq \varepsilon \int\limits_{Q_{qR}}u_{t}^{2}\eta
^{s}dxdt+\frac{C_{q}}{\varepsilon }\int\limits_{Q_{qR}}x_{N}^{2}\left( \nabla \Delta
u\right) ^{2}dxdt,
\end{equation*}
where we took into account that $x_{N}\leq С R$ on $Q_{qR}$.
Choosing, for example, $\varepsilon =\frac{1}{10}$ and estimating
the second integral from inequality \eqref{3.15}, we obtain

\begin{equation}
|I_{1}|\leq \frac{1}{10}\int\limits_{Q_{qR}}u_{t}^{2}\eta
^{s}dxdt+\varepsilon
_{2}\int\limits_{Q_{q^{2}R}}u_{t}^{2}dxdt+\frac{C_{q}}{\varepsilon _{2}R^{2}}\int\limits_{Q_{q^{2}R}}(\nabla
u)^{2}dxdt+\frac{C_{q}}{R^{4}}\int\limits_{Q_{q^{2}R}}u^{2}dxdt.  \label{3.24}
\end{equation}

Further, for $I_{2}$, taking again into account that $x_{N}\leq С R$
on $Q_{qR}$, we have

\begin{equation*}
|I_{2}|\leq \theta \int\limits_{Q_{qR}}\left( \nabla u_{t}\right)
^{2}dxdt+\frac{C_{q}}{\theta }\int\limits_{Q_{qR}}x_{N}^{2}\left( \Delta u\right)
^{2}dxdt.
\end{equation*}
Choosing here $\theta =\varepsilon _{4}R^{2}$ and using
\eqref{3.22}, \eqref{3.12+1}, we obtain

\begin{equation*}
|I_{2}|\leq \frac{\varepsilon _{4}C_{q}}{\varepsilon
_{3}}\int\limits_{Q_{q^{2}R}}u_{t}^{2}dxdt+\varepsilon _{3}\varepsilon
_{4}R^{2}\int\limits_{Q_{q^{2}R}}x_{N}^{2}(\Delta u_{t})^{2}dxdt+
\end{equation*}

\begin{equation*}
+\left( \frac{\varepsilon _{4}}{\varepsilon _{2}\varepsilon
_{3}}+\frac{1}{\varepsilon _{4}}\right) \frac{C_{q}}{R^{2}}\int\limits_{Q_{q^{2}R}}(\nabla
u)^{2}dxdt+\left( \frac{\varepsilon _{4}}{\varepsilon
_{3}}+\frac{1}{\varepsilon _{4}}\right)
\frac{C_{q}}{R^{4}}\int\limits_{Q_{q^{2}R}}u^{2}dxdt,
\end{equation*}
or, choosing $\varepsilon _{4}=\varepsilon _{5}\frac{\varepsilon
_{3}}{C_{q}}$,

\begin{equation*}
|I_{2}|\leq \varepsilon
_{5}\int\limits_{Q_{q^{2}R}}u_{t}^{2}dxdt+\varepsilon _{5}\varepsilon
_{3}^{2}C_{q}R^{2}\int\limits_{Q_{q^{2}R}}x_{N}^{2}(\Delta u_{t})^{2}dxdt+
\end{equation*}

\begin{equation}
+\left( \frac{\varepsilon _{5}}{\varepsilon _{2}}+\frac{1}{\varepsilon
_{5}\varepsilon _{3}}\right)
\frac{C_{q}}{R^{2}}\int\limits_{Q_{q^{2}R}}(\nabla u)^{2}dxdt+\left( \varepsilon _{5}+\frac{1}{\varepsilon
_{5}\varepsilon _{3}}\right)
\frac{C_{q}}{R^{4}}\int\limits_{Q_{q^{2}R}}u^{2}dxdt.  \label{3.25}
\end{equation}

The integral $I_{3}$ is estimated completely analogously to $I_{2}$,
which gives

\begin{equation*}
|I_{3}|\leq \varepsilon
_{5}\int\limits_{Q_{q^{2}R}}u_{t}^{2}dxdt+\varepsilon _{5}\varepsilon
_{3}^{2}C_{q}R^{2}\int\limits_{Q_{q^{2}R}}x_{N}^{2}(\Delta u_{t})^{2}dxdt+
\end{equation*}

\begin{equation}
+\left( \frac{\varepsilon _{5}}{\varepsilon _{2}}+\frac{1}{\varepsilon
_{5}\varepsilon _{3}}\right)
\frac{C_{q}}{R^{2}}\int\limits_{Q_{q^{2}R}}(\nabla u)^{2}dxdt+\left( \varepsilon _{5}+\frac{1}{\varepsilon
_{5}\varepsilon _{3}}\right)
\frac{C_{q}}{R^{4}}\int\limits_{Q_{q^{2}R}}u^{2}dxdt.  \label{3.26}
\end{equation}

For $I_{4}$, using again \eqref{3.12+1}, we have

\begin{equation*}
|I_{4}|\leq \frac{C_{q}}{R^{2}}\int\limits_{Q_{qR}}x_{N}^{2}\left( \Delta
u\right) ^{2}dxdt\leq
\end{equation*}

\begin{equation}
\leq \frac{C_{q}}{R^{2}}\int\limits_{Q_{q^{2}R}}(\nabla
u)^{2}dxdt+\frac{C_{q}}{R^{4}}\int\limits_{Q_{q^{2}R}}u^{2}dxdt.  \label{3.27}
\end{equation}

Further,

\begin{equation}
|I_{5}|\leq \frac{1}{10}\int\limits_{Q_{qR}}u_{t}^{2}\eta
^{s}dxdt+\frac{C_{q}}{R^{2}}\int\limits_{Q_{qR}}(\nabla u)^{2}dxdt,  \label{3.28}
\end{equation}

\begin{equation}
|I_{6}|\leq \frac{C_{q}}{R^{2}}\int\limits_{Q_{qR}}(\nabla u)^{2}dxdt.
\label{3.29}
\end{equation}

We use relations \eqref{3.24}- \eqref{3.29} to estimate the right
hand side of \eqref{3.23} and move the terms with $u_{t}^{2}\eta
^{s}$ to the left. As $q$ is arbitrary, taking into account the
properties of  $\eta $, we obtain

\begin{equation*}
\int\limits_{Q_{R}}u_{t}^{2}dxdt\leq \varepsilon
_{5}\int\limits_{Q_{qR}}u_{t}^{2}dxdt+\varepsilon _{5}\varepsilon
_{3}^{2}C_{q}R^{2}\int\limits_{Q_{qR}}x_{N}^{2}(\Delta u_{t})^{2}dxdt+
\end{equation*}

\begin{equation*}
+\left( \frac{\varepsilon _{5}}{\varepsilon _{2}}+\frac{1}{\varepsilon
_{5}\varepsilon _{3}}\right)
\frac{C_{q}}{R^{2}}\int\limits_{Q_{q^{2}R}}(\nabla u)^{2}dxdt+\left( \varepsilon _{5}+\frac{1}{\varepsilon
_{5}\varepsilon _{3}}\right)
\frac{C_{q}}{R^{4}}\int\limits_{Q_{q^{2}R}}u^{2}dxdt.
\end{equation*}

Choosing here  $\varepsilon _{5}\lessdot 1$ and using Lemma
\ref{L3.2}, we arrive at the estimate

\begin{equation*}
\int\limits_{Q_{R}}u_{t}^{2}dxdt\leq \varepsilon
_{3}^{2}C_{q}R^{2}\int\limits_{Q_{qR}}x_{N}^{2}(\Delta u_{t})^{2}dxdt+
\end{equation*}

\begin{equation}
+\frac{1}{\varepsilon _{2}\varepsilon
_{3}}\frac{C_{q}}{R^{2}}\int\limits_{Q_{qR}}(\nabla u)^{2}dxdt+\left( 1+\frac{1}{\varepsilon
_{3}}\right) \frac{C_{q}}{R^{4}}\int\limits_{Q_{qR}}u^{2}dxdt.  \label{3.30}
\end{equation}

To obtain one more integral inequality multiply equation \eqref{1.1}
by  $\left[ \nabla \left( x_{N}^{2}\nabla \Delta u\right)
_{t}\right]\eta ^{s}$ with subsequent integration  and represent the
result as

\begin{equation*}
\int\limits_{Q_{R}}u_{t}\nabla \left( x_{N}^{2}\nabla \Delta
u_{t}\right) \eta ^{s}dxdt+\frac{1}{2}\int\limits_{Q_{R}}\left(
\left[ \nabla \left( x_{N}^{2}\nabla \Delta u\right) \right]
\bigskip ^{2}\right) _{t}\eta ^{s}dxdt-
\end{equation*}
\[
-\beta \int\limits_{Q_{R}}\nabla \left( x_{N}^{2}\nabla \Delta
u\right) _{t}\Delta u\eta ^{s}dxdt=0.
\]
Integrating by parts with respect to the $x$-variables in the first
and in the third integrals and integrating by parts with respect to
the $t$-variable in the second integral, we obtain

\begin{equation*}
-\int\limits_{Q_{qR}}x_{N}^{2}\nabla u_{t}\nabla \Delta u_{t}\eta
^{s}dxdt+\frac{\beta }{2}\int\limits_{Q_{qR}}x_{N}^{2}\left( \nabla \Delta u\right)
_{t}^{2}\eta ^{s}dxdt=
\end{equation*}

\begin{equation*}
=s\int\limits_{Q_{qR}}x_{N}^{2}u_{t}\nabla \Delta u_{t}\nabla \eta
\eta ^{s-1}dxdt+\frac{s}{2}\int\limits_{Q_{qR}}\left[ \nabla \left(
x_{N}^{2}\nabla \Delta u\right) \right] \bigskip ^{2}\eta _{t}\eta
^{s-1}dxdt-
\end{equation*}
\[
-\beta s\int\limits_{Q_{qR}}x_{N}^{2}\nabla \Delta u_{t}\nabla \eta
\eta ^{s-1}\Delta udxdt.
\]
We transform the resulting equation as follows. First we integrate
by parts with respect to the $x$-variables in the first term on the
left and in the first term on the right. The integral over the
surface $\left\{ x_{N}=0\right\} $ vanishes in view of condition
\eqref{1.2} and we recall that this condition is valid under
assumptions of the lemma for the function $u$ and for it's
derivative $u_{t}$. Besides, we integrate by parts with respect to
the $t$-variable in the second term on the left and in the last term
on the right. Finally, using equation \eqref{1.1}, we just replace
in the second term on the right   $\nabla \left( x_{N}^{2}\nabla
\Delta u\right) =-u_{t}+\beta \Delta u$. As the result we get the
equality

\bigskip \begin{equation*}
\int\limits_{Q_{qR}}x_{N}^{2}\left( \Delta u_{t}\right) ^{2}\eta
^{s}dxdt=-2\int\limits_{Q_{qR}}x_{N}\left( u_{t}\right) _{x_{N}}\Delta
u_{t}\eta ^{s}dxdt-s\int\limits_{Q_{qR}}x_{N}^{2}\nabla u_{t}\Delta
u_{t}\nabla \eta \eta ^{s-1}dxdt+
\end{equation*}

\begin{equation*}
+\frac{\beta s}{2}\int\limits_{Q_{qR}}x_{N}^{2}\left( \nabla \Delta
u\right) ^{2}\eta _{t}\eta
^{s-1}dxdt-2s\int\limits_{Q_{qR}}x_{N}u_{t}\Delta u_{t}\eta _{x_{N}}\eta
^{s-1}dxdt-s\int\limits_{Q_{qR}}x_{N}^{2}\nabla u_{t}\Delta u_{t}\nabla
\eta \eta ^{s-1}dxdt-
\end{equation*}

\begin{equation*}
-s\int\limits_{Q_{qR}}x_{N}^{2}u_{t}\Delta u_{t}\nabla \left( \nabla \eta
\eta ^{s-1}\right) dxdt+\frac{s}{2}\int\limits_{Q_{qR}}\left[ -u_{t}+\beta
\Delta u\right] \bigskip ^{2}\eta _{t}\eta ^{s-1}dxdt+
\end{equation*}

\begin{equation}
+\beta s\int\limits_{Q_{qR}}x_{N}^{2}\nabla \Delta u\left( \nabla \eta \eta
^{s-1}\right) _{t}\Delta udxdt+\beta s\int\limits_{Q_{qR}}x_{N}^{2}\nabla
\Delta u\nabla \eta \eta ^{s-1}\Delta u_{t}dxdt\equiv
\sum\limits_{k=1}^{9}I_{k}.  \label{3.31}
\end{equation}
The integrals $I_{k}$ are estimated according to the same schema as
above. We have

\begin{equation}
\left\vert I_{1}\right\vert +\left\vert I_{2}\right\vert +\left\vert
I_{5}\right\vert \leq \varepsilon \int\limits_{Q_{qR}}x_{N}^{2}\left(
\Delta u_{t}\right) ^{2}\eta ^{s}dxdt+\frac{C}{\varepsilon
}\int\limits_{Q_{qR}}\left( \nabla u_{t}\right) ^{2}dxdt\leq   \label{3.32}
\end{equation}

\bigskip \begin{equation*}
\leq \varepsilon \int\limits_{Q_{qR}}x_{N}^{2}\left( \Delta u_{t}\right)
^{2}\eta ^{s}dxdt+\varepsilon _{3}\frac{C_{q}}{\varepsilon
}\int\limits_{Q_{q^{2}R}}x_{N}^{2}(\Delta u_{t})^{2}dxdt+
\end{equation*}

\begin{equation*}
+\frac{C_{q}}{\varepsilon \varepsilon
_{3}R^{2}}\int\limits_{Q_{q^{2}R}}u_{t}^{2}dxdt+\frac{C_{q}}{\varepsilon \varepsilon
_{2}\varepsilon _{3}R^{4}}\int\limits_{Q_{q^{2}R}}(\nabla
u)^{2}dxdt+\frac{C_{q}}{\varepsilon R^{6}\varepsilon _{3}}\int\limits_{Q_{q^{2}R}}u^{2}dxdt,
\end{equation*}
where we made use of \eqref{3.22}. Further,

\begin{equation*}
\left\vert I_{3}\right\vert \leq
\frac{C_{q}}{R^{2}}\int\limits_{Q_{qR}}x_{N}^{2}\left( \nabla \Delta u\right) ^{2}dxdt\leq
\end{equation*}

\begin{equation}
\leq \frac{\varepsilon
_{2}C_{q}}{R^{2}}\int\limits_{Q_{q^{2}R}}u_{t}^{2}dxdt+\frac{C_{q}}{\varepsilon
_{2}R^{4}}\int\limits_{Q_{q^{2}R}}(\nabla
u)^{2}dxdt+\frac{C_{q}}{R^{6}}\int\limits_{Q_{q^{2}R}}u^{2}dxdt,  \label{3.33}
\end{equation}
where we took into account estimate \eqref{3.15}. Further,

\begin{equation}
\left\vert I_{4}\right\vert +\left\vert I_{6}\right\vert \leq \varepsilon
\int\limits_{Q_{qR}}x_{N}^{2}\left( \Delta u_{t}\right) ^{2}\eta
^{s}dxdt+\frac{C_{q}}{\varepsilon R^{2}}\int\limits_{Q_{qR}}u_{t}^{2}dxdt,
\label{3.34}
\end{equation}

\begin{equation*}
\left\vert I_{7}\right\vert \leq
\frac{C_{q}}{R^{2}}\int\limits_{Q_{qR}}u_{t}^{2}dxdt+\beta \frac{C_{q}}{R^{2}}\left( \beta
\int\limits_{Q_{qR}}\left( \Delta u\right) ^{2}dxdt\right) \leq
\end{equation*}

\begin{equation}
\leq
\frac{C_{q}}{R^{2}}\int\limits_{Q_{q^{2}R}}u_{t}^{2}dxdt+\frac{C_{q}}{\varepsilon _{2}R^{4}}\int\limits_{Q_{q^{2}R}}(\nabla
u)^{2}dxdt+\frac{C_{q}}{R^{6}}\int\limits_{Q_{q^{2}R}}u^{2}dxdt,  \label{3.35}
\end{equation}
where we made use of \eqref{3.15}. For the next integral we have

\begin{equation*}
\left\vert I_{8}\right\vert \leq \frac{C_{q}}{R^{2}}\left(
\int\limits_{Q_{qR}}x_{N}^{2}\left( \nabla \Delta u\right) ^{2}dxdt+\beta
\int\limits_{Q_{qR}}\left( \Delta u\right) ^{2}dxdt\right) \leq
\end{equation*}

\begin{equation}
\leq \frac{\varepsilon
_{2}C_{q}}{R^{2}}\int\limits_{Q_{q^{2}R}}u_{t}^{2}dxdt+\frac{C_{q}}{\varepsilon
_{2}R^{4}}\int\limits_{Q_{q^{2}R}}(\nabla
u)^{2}dxdt+\frac{C_{q}}{R^{6}}\int\limits_{Q_{q^{2}R}}u^{2}dxdt,
\label{3.36}
\end{equation}
which is analogous to \eqref{3.15}. Further, as before

\begin{equation*}
\left\vert I_{9}\right\vert \leq \varepsilon
\int\limits_{Q_{qR}}x_{N}^{2}\left( \Delta u_{t}\right) ^{2}\eta
^{s}dxdt+\frac{C_{q}}{\varepsilon R^{2}}\int\limits_{Q_{qR}}x_{N}^{2}\left( \nabla
\Delta u\right) ^{2}dxdt\leq
\end{equation*}

\begin{equation*}
\leq \varepsilon \int\limits_{Q_{qR}}x_{N}^{2}\left( \Delta u_{t}\right)
^{2}\eta ^{s}dxdt+
\end{equation*}

\begin{equation}
+\frac{\varepsilon _{2}C_{q}}{\varepsilon
R^{2}}\int\limits_{Q_{q^{2}R}}u_{t}^{2}dxdt+\frac{C_{q}}{\varepsilon _{2}\varepsilon
R^{4}}\int\limits_{Q_{q^{2}R}}(\nabla u)^{2}dxdt+\frac{C_{q}}{\varepsilon
R^{6}}\int\limits_{Q_{q^{2}R}}u^{2}dxdt.  \label{3.37}
\end{equation}

The estimate for the left hand side of \eqref{3.31} follows from
estimates \eqref{3.32}- \eqref{3.37}. We substitute these estimates
in \eqref{3.31}, then we choose sufficiently small $\varepsilon $ in
\eqref{3.32}, \eqref{3.34}, and \eqref{3.37}  and move the
corresponding integrals with  $x_{N}^{2}\left( \Delta u_{t}\right)
^{2}\eta ^{s}$ to the left hand side of \eqref{3.31}. We also
estimate the integrals over $Q_{qR}$ on the right hand side of
\eqref{3.31} by the same integrals over  $Q_{q^{2}R}$ and we denote
$q^{2}$ again by $q$ (as $q$ is arbitrary). As a result, we obtain
the estimate

\begin{equation*}
\int\limits_{Q_{R}}x_{N}^{2}\left( \Delta u_{t}\right) ^{2}dxdt\leq
\varepsilon _{3}C_{q}\int\limits_{Q_{qR}}x_{N}^{2}(\Delta u_{t})^{2}dxdt+
\end{equation*}

\begin{equation*}
+\frac{(1+\varepsilon _{2})C_{q}}{\varepsilon
_{3}R^{2}}\int\limits_{Q_{qR}}u_{t}^{2}dxdt+\frac{C_{q}}{\varepsilon _{3}\varepsilon
_{2}R^{4}}\int\limits_{Q_{qR}}(\nabla u)^{2}dxdt+\frac{C_{q}}{\varepsilon
_{3}R^{6}}\int\limits_{Q_{qR}}u^{2}dxdt.
\end{equation*}
Choosing here and in \eqref{3.30} $\varepsilon _{3}=\theta
C_{q}^{-1}$, $\theta \in (0,1/2)$ and making use of Lemma
\ref{L3.2}, we arrive at the estimate

\begin{equation*}
\int\limits_{Q_{R}}x_{N}^{2}\left( \Delta u_{t}\right) ^{2}dxdt\leq
\end{equation*}

\begin{equation}
\leq \frac{(1+\varepsilon _{2})C_{q}}{R^{2}\theta
}\int\limits_{Q_{qR}}u_{t}^{2}dxdt+\frac{C_{q}}{\varepsilon _{2}R^{4}\theta
}\int\limits_{Q_{qR}}(\nabla u)^{2}dxdt+\frac{C_{q}}{R^{6}\theta
}\int\limits_{Q_{qR}}u^{2}dxdt.  \label{3.38}
\end{equation}

Now we will combine our estimates to prove  \eqref{2.1}. Under our
choice of  $\varepsilon _{3}=\theta C_{q}^{-1}$ it follows from
\eqref{3.30} and \eqref{3.38} that for  $\varepsilon _{2}<1$

\begin{equation*}
\int\limits_{Q_{R}}u_{t}^{2}dxdt\leq \frac{\theta ^{2}}{C_{q}}R^{2}\left(
\frac{(1+\varepsilon _{2})C_{q}}{R^{2}\theta
}\int\limits_{Q_{q^{2}R}}u_{t}^{2}dxdt+\frac{C_{q}C}{\varepsilon _{2}R^{4}\theta
}\int\limits_{Q_{q^{2}R}}(\nabla u)^{2}dxdt+\frac{C_{q}}{R^{6}\theta
}\int\limits_{Q_{qR}}u^{2}dxdt\right) +
\end{equation*}

\begin{equation*}
+\frac{1}{\varepsilon _{2}\theta
}\frac{C_{q}}{R^{2}}\int\limits_{Q_{qR}}(\nabla u)^{2}dxdt+\left( 1+\frac{1}{\theta }\right)
\frac{C_{q}}{R^{4}}\int\limits_{Q_{qR}}u^{2}dxdt\leq \end{equation*}

\begin{equation*}
\leq \theta \int\limits_{Q_{q^{2}R}}u_{t}^{2}dxdt+\frac{C_{q}}{\varepsilon
_{2}\theta R^{2}}\int\limits_{Q_{q^{2}R}}(\nabla
u)^{2}dxdt+\frac{C_{q}}{\theta R^{4}}\int\limits_{Q_{q^{2}R}}u^{2}dxdt.
\end{equation*}

Denoting here $q^{2}$ again by $q$ and applying Lemma  \ref{L3.2},
we get the estimate

\begin{equation}
\int\limits_{Q_{R}}u_{t}^{2}dxdt\leq \frac{C_{q}}{\varepsilon
_{2}R^{2}}\int\limits_{Q_{qR}}(\nabla
u)^{2}dxdt+\frac{C_{q}}{R^{4}}\int\limits_{Q_{qR}}u^{2}dxdt.  \label{3.39}
\end{equation}

Substitute now estimates \eqref{3.12+1} an \eqref{3.15} in estimate
\eqref{3.9} and take into account \eqref{3.15}. We obtain

\begin{equation*}
\int\limits_{Q_{R}}|\nabla u|^{2}dxdt\leq \end{equation*}

\begin{equation*}
\leq \varepsilon _{1}R^{2}\left( \varepsilon
_{2}\int\limits_{Q_{q^{2}R}}u_{t}^{2}dxdt+\frac{C_{q}}{\varepsilon
_{2}R^{2}}\int\limits_{Q_{q^{2}R}}(\nabla
u)^{2}dxdt+\frac{C_{q}}{R^{4}}\int\limits_{Q_{q^{2}R}}u^{2}dxdt\right) +
\end{equation*}

\begin{equation*}
+\varepsilon _{1}\left( C_{q}\int\limits_{Q_{q^{2}R}}\left\vert \nabla
u\right\vert
^{2}dxdt+\frac{C_{q}}{R^{2}}\int\limits_{Q_{q^{2}R}}u^{2}dxdt\right) +\frac{C_{q}}{\varepsilon
_{1}R^{2}}\int\limits_{Q_{qR}}u^{2}dxdt\leq
\end{equation*}

\begin{equation*}
\leq \varepsilon _{1}R^{2}\varepsilon _{2}\left( \frac{C_{q}}{\varepsilon
_{2}R^{2}}\int\limits_{Q_{q^{3}R}}(\nabla
u)^{2}dxdt+\frac{C_{q}}{R^{4}}\int\limits_{Q_{q^{3}R}}u^{2}dxdt\right) +
\end{equation*}

\begin{equation*}
+\frac{\varepsilon _{1}C_{q}}{\varepsilon
_{2}}\int\limits_{Q_{q^{2}R}}(\nabla u)^{2}dxdt+\frac{C_{q}}{\varepsilon
_{1}R^{2}}\int\limits_{Q_{q^{2}R}}u^{2}dxdt\leq
\end{equation*}

\begin{equation*}
\leq \frac{\varepsilon _{1}C_{q}}{\varepsilon
_{2}}\int\limits_{Q_{q^{3}R}}(\nabla u)^{2}dxdt+\frac{C_{q}}{\varepsilon
_{1}R^{2}}\int\limits_{Q_{q^{3}R}}u^{2}dxdt.
\end{equation*}
Denoting here $q^{3}$ again by $q$, choosing $\varepsilon
_{1}=\varepsilon \varepsilon _{2}C_{q}^{-1}$, an applying Lemma
\ref{L3.2}, we obtain finally

\begin{equation*}
\int\limits_{Q_{R}}|\nabla u|^{2}dxdt\leq
\frac{C_{q}}{R^{2}}\int\limits_{Q_{qR}}u^{2}dxdt,
\end{equation*}
that is exactly the first from inequalities \eqref{2.1}. The second
from these inequalities follows now from \eqref{3.39}.

Thus, Lemma  \ref{L2.1} is proved and this finishes also the proof
of Theorem \ref{T1.1}.



\end{document}